\newtheorem{thm}[equation]{Theorem}
\newtheorem{cor}[equation]{Corollary}
\newtheorem{prop}[equation]{Proposition}
\theoremstyle{definition}
\newtheorem{lem}[equation]{Lemma}
\newtheorem{defn}[equation]{Definition}
\newtheorem{ex}[equation]{Example}
\newtheorem{exs}[equation]{Examples}
\newtheorem{obs}[equation]{Observation}
\newtheorem*{ack}{Acknowledgments}
\numberwithin{equation}{section}
\let\realsubsection\subsection% keep a copy
\def\subsection{\setcounter{subsection}{\arabic{equation}}% sync the counters
   \refstepcounter{equation}%
   \realsubsection% invoke the real \section
}
\newcommand{\Z}{\mathbb Z}
\newcommand{\CSH}{\mathbb H}
\newcommand{\bu}{\bullet}
\newcommand{\al}{\alpha}
\newcommand{\be}{\beta}
\newcommand{\ep}{\varepsilon}
\newcommand{\wt}{\widetilde}
\newcommand{\ot}{\otimes}
\newcommand{\De}{\Delta}
\newcommand{\CB}{\mathcal{B}}
\newcommand{\CZ}{\mathcal{Z}}
\newcommand{\CF}{{\mathcal{F}}}
\newcommand{\delt}{\mathcal{D}}
\newcommand{\NCH}{\overline{CH}}
\newcommand{\B}[1]{\textit{\textbf{#1}}}
\newcommand{\BF}{\textit{\textbf{F}}}
\newcommand{\BM}{\textit{\textbf{M}}}
\newcommand{\ov}[1]{_{\hspace{-.2mm}/\hspace{-.5mm}#1}}
\newcommand{\un}[1]{\underline{#1}}
\newcommand{\eps}[2]{|\underline{a_{#1}}|+\dots+|\underline{a_{#2}}|}
\newcommand{\gen}[1]{\langle #1 \rangle}
\newcommand{\hip}[4]{ [ \hspace{1mm}#1 \hspace{1mm}|\hspace{1mm}#2\hspace{1mm}|\hspace{1mm}#3\hspace{1mm}|\hspace{1mm}#4\hspace{1mm}]}
\newcommand{\rep}[1]{\underset{ \widetilde{\hspace{5mm}}}{#1}}
\newcommand{\cancl}[1]{\scalebox{0.6}{\circled{#1}}}
\newcommand*\circled[1]{\tikz[baseline=(char.base)]{
            \node[shape=rectangle,draw, fill=gray!20] (char) {\bf #1};}}
\title[A note on the string topology BV-algebra for $S^2$ with $\Z_2$ coefficients]{A note on the string topology BV-algebra for $S^2$ with $\Z_2$ coefficients}
\author[K.~Poirier]{Kate~Poirier}
  \address{Kate Poirier,
  Department of Mathematics, New York City College of Technology, City University of New York, 300 Jay Street, Brooklyn, NY 11201}
  \email{kpoirier@citytech.cuny.edu}
\author[T.~Tradler]{Thomas~Tradler}
  \address{Thomas Tradler,
  Department of Mathematics, New York City College of Technology, City University of New York, 300 Jay Street, Brooklyn, NY 11201}
  \email{ttradler@citytech.cuny.edu}
\keywords{String topology, Hochschild cohomology, BV algebra, Poincar\'e duality}
\subjclass[2010]{55P50, 16E40 (primary), 57P10, 08A65 (secondary)}
\begin{document}
\maketitle
\begin{abstract}
Luc Menichi showed that the BV algebras on  $H_\bu(L S^2;\Z_2)[-2]$ coming from string topology and the one on $HH^\bu(H^\bu( S^2;\Z_2), H^\bu( S^2;\Z_2))$ using Poincar\'e duality on $H^\bu( S^2;\Z_2)$ are not isomorphic. In this note we show how one can obtain the string topology BV algebra on Hochschild cohomology using a Poincar\'e duality structure with higher homotopies. This Poincar\'e duality (with higher homotopies) on cohomology is induced by a local Poincar\'e duality (with higher homotopies) on the cochain level.
\end{abstract}

\section{Main Statements}\label{SEC:Introduction}

\subsection{BV algebras for the $2$-sphere with $\Z_2$ coefficients} 
In \cite[Theorem 24]{M1} Luc Menichi calculated the string topology BV algebra (defined by Moira Chas and Dennis Sullivan in \cite{CS}) for the $2$-sphere $ M= S^2$ with $\Z_2=\Z/2\Z$ coefficients to be
\begin{align}\label{EQU:CH(LS2,Z2)}
\CSH_\bu(L S^2;\Z_2)&\cong \Lambda a\otimes \Z_2[u]\cong \bigoplus_{k\geq 0} \Z_2.\al_k\oplus \bigoplus_{k\geq 0} \Z_2.\be_k,\\
\nonumber  &\text{ where } \al_k\cdot \al_\ell=\al_{k+\ell}, \be_k\cdot \be_\ell=0, \text{ and } \al_k\cdot \be_\ell=\be_\ell\cdot \al_k=\be_{k+\ell},\\
\nonumber &\text{ and } \Delta_{\text{ST}}(\al_k)=0 \text{ and } \Delta_{\text{ST}}(\be_k)=k\cdot \al_{k-1}+k\cdot \be_{k+1}.
\end{align}
Here, the degrees are given by $|a|=-2$, $|u|=1$, and thus, setting $\al_k=1\otimes u^k$ and $\be_k=a\otimes u^k$, we have $|\al_k|=k$, $|\be_k|=k-2$.

Moreover, in \cite[Proposition 20]{M1}, Menichi calculated the BV algebra for Hochschild cohomology of the cohomology of $S^2$ with $\Z_2$ coefficients to be
\begin{align}\label{EQU:HH(H(S2,Z2))}
HH^\bu(H^\bu(S^2;\Z_2); H^\bu(S^2;\Z_2))&\cong \Lambda g\otimes \Z_2[f]\cong \bigoplus_{k\geq 0} \Z_2.\phi_k\oplus \bigoplus_{k\geq 0} \Z_2.\psi_k,\\
\nonumber  &\hspace{-2.8cm}\text{ where } \phi_k\cdot \phi_\ell=\phi_{k+\ell}, \psi_k\cdot \psi_\ell=0, \text{ and } \phi_k\cdot \psi_\ell=\psi_\ell\cdot \phi_k=\psi_{k+\ell},\\
\nonumber &\hspace{-2.8cm}\text{ and } \Delta(\phi_k)=0 \text{ and } \Delta(\psi_k)=k\cdot \phi_{k-1}.
\end{align}
There degrees are similarly given by $|g|=-2$, $|f|=1$, and so, setting $\phi_k=1\otimes f^k$ and $\psi_k=g\otimes f^k$, we have $|\phi_k|=k$, $|\psi_k|=k-2$.

Note, that these BV algebras differ in their $\Delta$ operators, and, in fact, Menichi obtained the following result.
\begin{thm}\cite[Corollary 30]{M1}\label{THM:Menichi-no-BV-iso}
There is no isomorphism of BV algebras between\linebreak $(\CSH_\bu(LS^2;\Z_2),\cdot, \Delta_{\text{\emph{ST}}})$ and $(HH^\bu(H^\bu(S^2;\Z_2); H^\bu(S^2;\Z_2)),\cdot,\Delta)$.
\end{thm}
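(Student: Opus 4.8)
The plan is to compare the two BV algebras as purely algebraic objects, treating the isomorphism question as a constraint-satisfaction problem on the generators. Both algebras have the same underlying graded commutative algebra $\Lambda x \otimes \Z_2[y]$ with $|x| = -2$, $|y| = 1$; an algebra isomorphism must respect this grading. In degree $k$ for $k \geq 1$ the component is one-dimensional (spanned by $\al_k$ resp.\ $\phi_k$), so up to the $\Z_2$-scalar the algebra isomorphism is forced on those classes; similarly in degree $k-2$ for $k \geq 3$. The only freedom is in low degrees: degree $0$ is two-dimensional (spanned by $\al_0$ and $\be_2$, resp.\ $\phi_0$ and $\psi_2$), and there one could in principle send $\phi_0 \mapsto \al_0 + \lambda \be_2$ for $\lambda \in \Z_2$; degrees $-1$ and $-2$ are each one-dimensional. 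Using multiplicativity and the relations $\be_k \cdot \be_\ell = 0$, $\al_k \cdot \be_\ell = \be_{k+\ell}$ one pins down what any algebra isomorphism $F$ must do: essentially $F(\phi_k) = \al_k$ and $F(\psi_k) = \be_k$ (modulo the small ambiguities above, which I would enumerate).

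Next I would impose compatibility with $\Delta$. The BV identity forces $\Delta$ to be a derivation-like operator only up to the BV correction term, but here the key leverage is the explicit formulas: $\Delta(\phi_k) = 0$ for all $k$, whereas $\Delta_{\text{ST}}(\be_k) = k \cdot \al_{k-1} + k \cdot \be_{k+1}$. Apply $F$ to the equation $\Delta(\psi_k) = k \cdot \phi_{k-1}$ and compare with $\Delta_{\text{ST}}(F(\psi_k))$. Taking $k$ odd (so $k \equiv 1 \bmod 2$) gives on one side $F(\phi_{k-1}) = \al_{k-1}$ (for $k \geq 3$, where $F$ is forced), and on the other side $\Delta_{\text{ST}}(\be_k) = \al_{k-1} + \be_{k+1}$, plus possible correction terms from the low-degree ambiguity in $F(\psi_k)$ and from $\Delta$ not literally commuting but only up to the BV operator — here though $F$ is a strict BV-algebra map by hypothesis, so $F \circ \Delta = \Delta_{\text{ST}} \circ F$ exactly. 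The mismatch is the extra $\be_{k+1}$ term: it cannot be cancelled because $F(\phi_{k-1}) = \al_{k-1}$ has no $\be$-component for $k-1 \geq 1$. This is the contradiction.

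The main obstacle is handling the low-degree generators carefully, since that is exactly where an algebra isomorphism has wiggle room and where a naive "the classes are forced" argument breaks down; one must check that no choice of $F(\phi_0)$, $F(\psi_0)$, $F(\psi_1)$, $F(\psi_2)$ among the finitely many $\Z_2$-combinations rescues the situation. Concretely I would argue: multiplicativity already forces $F(\psi_k) = F(\phi_k)\cdot F(\psi_0) = \al_k \cdot F(\psi_0)$ for $k \geq 1$, and since $F(\psi_0)$ lies in degree $-2$, which is one-dimensional spanned by $\be_0$, we get $F(\psi_0) = \be_0$ (it must be nonzero as $F$ is injective), hence $F(\psi_k) = \be_k$ for all $k \geq 0$ with no ambiguity at all. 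Likewise $F(\phi_k) = \al_k$ for $k \geq 1$, and $F(\phi_0) \in \{\al_0, \al_0 + \be_2\}$ but this does not enter the relevant computation. So the $\Delta$-comparison above goes through cleanly, and applying it at, say, $k = 3$ yields $\Delta_{\text{ST}}(F(\psi_3)) = \Delta_{\text{ST}}(\be_3) = \al_2 + \be_4 \neq \al_2 = F(\phi_2) = F(\Delta(\psi_3))$, contradicting $F \circ \Delta = \Delta_{\text{ST}} \circ F$. Therefore no such $F$ exists.
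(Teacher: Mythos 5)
The paper itself does not prove this statement---it is quoted from Menichi \cite[Corollary 30]{M1}---so your argument has to stand on its own. The overall strategy (enumerate all graded algebra isomorphisms, then show none of them intertwines $\Delta$ with $\Delta_{\text{ST}}$) is sound and your final comparison at $k=3$ is the right contradiction, but the enumeration step contains a genuine error: the claim that the degree-$k$ component is one-dimensional for $k\geq 1$, hence that $F(\phi_k)=\al_k$ is forced. Since $|\be_{k+2}|=k$ and $|\psi_{k+2}|=k$, every nonnegative degree is two-dimensional, spanned by $\al_k$ and $\be_{k+2}$ (resp.\ $\phi_k$ and $\psi_{k+2}$), and $F(\phi_k)=\al_k$ is \emph{not} forced: the map $\Theta(\phi_k)=\al_k+k\cdot\be_{k+2}$, $\Theta(\psi_k)=\be_k$ appearing in the proof of theorem \ref{THM:main-theorem} is an algebra isomorphism with $\Theta(\phi_k)\neq\al_k$ for odd $k$. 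So the step ``likewise $F(\phi_k)=\al_k$ for $k\geq 1$'' is unjustified (and false in general); if your dimension count were correct it would even contradict the existence of $\Theta$. This is precisely the ``wiggle room'' your argument set out to control, and it is not confined to low degrees.

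The gap is repairable, and the repair keeps your endgame intact. Both algebras are free graded-commutative on one generator of degree $1$ and one of degree $-2$ ($\phi_1,\psi_0$, resp.\ $\al_1,\be_0$), so an algebra map is determined by $F(\phi_1)$ and $F(\psi_0)$. Degree reasons and surjectivity force $F(\psi_0)=\be_0$ and $F(\phi_1)=\al_1+\lambda\,\be_3$ with $\lambda\in\Z_2$, so there are exactly two algebra isomorphisms (the case $\lambda=1$ being $\Theta$). In both cases $F(\phi_2)=F(\phi_1)^2=\al_2$, because over $\Z_2$ the cross term vanishes and $\be_3^2=0$, and $F(\psi_3)=F(\phi_1)^3F(\psi_0)=\be_3$, because $\be_j\cdot\be_0=0$. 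Then $F(\Delta(\psi_3))=F(\phi_2)=\al_2$ while $\Delta_{\text{ST}}(F(\psi_3))=\Delta_{\text{ST}}(\be_3)=\al_2+\be_4$, so neither isomorphism is compatible with the BV operators, which proves the theorem. As written, though, your proof rests on the false one-dimensionality claim rather than on this two-case check, so the justification needs to be replaced.
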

It is worth noting though, that the induced Gerstenhaber algebras on $\CSH_\bu(LS^2;\Z_2)$ and $HH^\bu(H^\bu(S^2;\Z_2); H^\bu(S^2;\Z_2))$ are isomorphic; cf. \cite[Corollary 23]{M1}.

\subsection{The BV algebra coming from local Poincar\'e duality}
A crucial ingredient for determining the $\Delta$ operator on Hochschild cohomology \eqref{EQU:HH(H(S2,Z2))} comes from a choice of Poincar\'e duality structure given as a bimodule isomorphism $F:H^\bu(S^2;\Z_2)\stackrel\cong\longrightarrow H_\bu(S^2;\Z_2)$. For the ($\Z_2$-)cohomology $H^\bu(S^2;\Z_2)$ and homology $H_\bu(S^2;\Z_2)$, an obvious choice is to define the degree $+2$ isomorphism $F$ as follows:
\[
\begin{tikzcd}[column sep=small]
& & \text{\tiny degree $-2$} & \text{\tiny degree $-1$} & \text{\tiny degree $0$} & \text{\tiny degree $1$} & \text{\tiny degree $2$} &  \\ 
H^\bu(S^2;\Z_2)\arrow{d}{F} &=\dots & \Z_2 \arrow{l}\arrow{drr}{F} & 0 \arrow{l} &\Z_2\arrow{l}\arrow{l}\arrow{drr}{F} &0\arrow{l} &0\arrow{l} & \dots \arrow{l} \\
H_\bu(S^2;\Z_2)&=\dots & 0\arrow{l} & 0\arrow{l} &\Z_2\arrow{l} &0\arrow{l} &\Z_2\arrow{l} & \dots \arrow{l}
\end{tikzcd}
\]
Indeed, this choice of $F$, which is the map given by capping with the $\Z_2$-fundamental class of $S^2$, leads to the BV algebra in \eqref{EQU:HH(H(S2,Z2))} as we will confirm in theorem \ref{THM:main-theorem} (together with \ref{SUBSEC:Delta-F-strict}) below.

Note that on the cochain level, capping with a fundamental cycle is not a bimodule map; see observation \ref{OBS:not-bimodule}. Using a generalization of bimodule maps, which, in addition to $F$, allows for higher homotopies, we show in this paper that one \emph{can} obtain the string topology BV algebra \eqref{EQU:CH(LS2,Z2)} on Hochschild cohomology. The precise definition of bimodule maps with higher homotopies was studied in \cite{T1} and will be reviewed in section \ref{SEC:Hochschild} below.

Applying the concept of bimodule maps with higher homotopies, we will construct a specific example of such a map for the case of a local cochain model of the $2$-sphere $C^\bu(S^2;\Z_2)$ in example \ref{EX:local-CS2Z2}. Then we pull this map back to obtain a bimodule map with homotopies $\wt{F}$ for cohomology $H^\bu(S^2;\Z_2)$ (see proposition \ref{PROP:pullback-of-local}). The resulting map $\wt{F}$, recorded in example \ref{EX:HS2Z2-with-homotopies}, has precisely one higher homotopy, when compared to $F$ (see \eqref{EQU:HIP-in-H(S2,Z2)}). Now, using this $\wt F$, we obtain the following main theorem.
\begin{thm}\label{THM:main-theorem}
The BV algebra on $HH^\bu(H^\bu(S^2;\Z_2); H^\bu(S^2;\Z_2))$ induced by $\wt F$ (coming from a local bimodule map with higher homotopies transferred to cohomology) is isomorphic to the string topology BV algebra \eqref{EQU:CH(LS2,Z2)} on $\CSH_\bu(LS^2;\Z_2)$.
\end{thm}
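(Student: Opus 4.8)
The plan is to compute the BV operator $\Delta_{\wt F}$ on $HH^\bu(H^\bu(S^2;\Z_2);H^\bu(S^2;\Z_2))$ explicitly in a convenient small model of the Hochschild complex, and then exhibit an explicit isomorphism of BV algebras onto $(\CSH_\bu(LS^2;\Z_2),\cdot,\Delta_{\text{ST}})$ given in \eqref{EQU:CH(LS2,Z2)}. Since the underlying Gerstenhaber algebras are already known to be isomorphic by \cite[Corollary 23]{M1} (and in fact both equal $\Lambda a\ot\Z_2[u]$ as graded-commutative algebras with the same bracket), the only thing that needs checking is that $\Delta_{\wt F}$ agrees with $\Delta_{\text{ST}}$ under the obvious assignment $\phi_k\mapsto\al_k$, $\psi_k\mapsto\be_k$ — equivalently, that the extra homotopy in $\wt F$ (recorded in example \ref{EX:HS2Z2-with-homotopies}, cf.\ \eqref{EQU:HIP-in-H(S2,Z2)}) contributes precisely the missing term $k\cdot\be_{k+1}$ to $\Delta_{\wt F}(\be_k)$, on top of the term $k\cdot\al_{k-1}$ that the strict $F$ already produces in \eqref{EQU:HH(H(S2,Z2))}.

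The main work is therefore a direct computation, carried out in several steps. First I would recall from section \ref{SEC:Hochschild} the formula for the BV operator on $HH^\bu(A;A)$ associated to a bimodule map with higher homotopies $\wt F\colon A\to A^\vee$: this is the Connes-type operator $B$ on the Hochschild chains of $A$ transported through the quasi-isomorphism $HH^\bu(A;A)\simeq HH_\bu(A;A^\vee)$ induced by $\wt F$; the presence of higher homotopies $\hip{\ }{\ }{\ }{\ }$ modifies the comparison map, hence the induced $\Delta_{\wt F}$, by correction terms built from those homotopies. Second, using the minimal model $A=H^\bu(S^2;\Z_2)=\Z_2[x]/(x^2)$ with $|x|=-2$ together with the explicit $\wt F$ of example \ref{EX:HS2Z2-with-homotopies} (whose only nonzero higher component is the single one displayed in \eqref{EQU:HIP-in-H(S2,Z2)}), I would write $\Delta_{\wt F}$ on the standard basis $\{\phi_k=1\ot x^{\ot ?},\ \psi_k=x\ot x^{\ot ?}\}$ of $HH^\bu$ — the degrees $|\phi_k|=k$, $|\psi_k|=k-2$ pin down which cyclic words represent each class — and track how $B$ acting on the cyclic bar complex, combined with the single correction term from the homotopy, evaluates on $\psi_k$. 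The expectation, matching \eqref{EQU:CH(LS2,Z2)}, is $\Delta_{\wt F}(\phi_k)=0$ and $\Delta_{\wt F}(\psi_k)=k\cdot\phi_{k-1}+k\cdot\psi_{k+1}$. Third, I would verify that the assignment $\phi_k\mapsto\al_k$, $\psi_k\mapsto\be_k$ respects the product (immediate, since both product structures are literally the one in \eqref{EQU:CH(LS2,Z2)}--\eqref{EQU:HH(H(S2,Z2))}) and now also $\Delta$, completing the isomorphism of BV algebras.

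The step I expect to be the genuine obstacle is the bookkeeping in the second step: getting the signs/coefficients right modulo $2$ is easy, but one must be careful that the higher homotopy in $\wt F$ enters the BV-operator formula with exactly the right combinatorial multiplicity so that the coefficient of $\be_{k+1}$ in $\Delta_{\wt F}(\psi_k)$ comes out to be $k$ (and not, say, $0$ or $1$) — i.e.\ that the homotopy interacts with the $k$ cyclic rotations of the word representing $\psi_k$ in the way that reproduces Menichi's $\Delta_{\text{ST}}$. A secondary point is to make sure the homotopy does \emph{not} also alter $\Delta_{\wt F}(\phi_k)$ away from $0$; this should follow from degree reasons, since $\phi_k$ is represented by a word with no $x$'s in it for the relevant term to act on, but it needs to be checked. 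Once the coefficient $k\cdot\be_{k+1}$ is confirmed, Theorem \ref{THM:main-theorem} follows, and as a sanity check one recovers Theorem \ref{THM:Menichi-no-BV-iso}: with the \emph{strict} $F$ (no homotopy) one gets $\Delta_F(\psi_k)=k\cdot\phi_{k-1}$ only, reproducing \eqref{EQU:HH(H(S2,Z2))}, which is consistent with the claim in \ref{SUBSEC:Delta-F-strict}.
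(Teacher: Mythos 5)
Your overall strategy (compute the BV operator induced by $\wt F$ on the standard basis $\phi_k,\psi_k$ and then write down an explicit algebra isomorphism to $\CSH_\bu(LS^2;\Z_2)$ intertwining the $\Delta$'s) is the same as the paper's, but the specific outcome you predict is wrong at exactly the point you flagged as a ``secondary point,'' and this breaks your proposed isomorphism. The higher homotopy $\wt \BF_{2,0}(\un{s},\un{s};e)(e)=1$ does \emph{not} leave $\Delta^{\wt\BF}(\phi_k)$ equal to $0$: in the operator $\CZ^{\wt\BF}$ there is a term in which the unit is inserted into the module slot of $\wt\BF_{2,0}$, two of the inputs $\un s$ are absorbed by the homotopy, and the output $e=\phi_k(\un{s},\dots,\un{s})$ lands in the dual evaluation slot; summing over the $k$ cyclic positions gives $\CZ^{\wt\BF}(\phi_k)=k\cdot\theta_{k+1}$, hence (as computed in \ref{SUBSEC:Delta-F-homotopies}) $\Delta^{\wt\BF}(\phi_k)=k\cdot(\phi_{k+1}+\psi_{k+3})$, which is nonzero for $k$ odd. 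Degree reasons cannot rule this out, since $\phi_{k+1}$ and $\psi_{k+3}$ sit in the same degree $k+1$; indeed each degree contains both a $\phi$ and a $\psi$ class, so degrees also do not ``pin down'' the identification you rely on.

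Consequently the ``obvious assignment'' $\phi_k\mapsto\al_k$, $\psi_k\mapsto\be_k$ is \emph{not} a morphism of BV algebras: it would force $\Delta_{\text{ST}}(\al_k)$ to be nonzero for odd $k$, contradicting $\Delta_{\text{ST}}(\al_k)=0$ in \eqref{EQU:CH(LS2,Z2)}. (Your reduction ``the Gerstenhaber algebras are isomorphic, so it suffices to check the obvious assignment'' is also logically insufficient: a BV isomorphism need not be the obvious algebra isomorphism.) The paper repairs exactly this: one has $\wt\Delta(\psi_k)=k(\phi_{k-1}+\psi_{k+1})$ \emph{and} $\wt\Delta(\phi_k)=k(\phi_{k+1}+\psi_{k+3})$, and the correct comparison map is the twisted isomorphism $\Theta(\phi_k)=\al_k+k\cdot\be_{k+2}$, $\Theta(\psi_k)=\be_k$ (from the last paragraph of Menichi's Lemma 21), which is an algebra isomorphism satisfying $\Delta_{\text{ST}}\circ\Theta=\Theta\circ\wt\Delta$. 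A further point your sketch glosses over is that one must verify that $\wt F$ is actually a Poincar\'e duality structure in the sense of definition \ref{DEF:PD-structure}, i.e.\ that $\CB\circ\wt\CF=\CZ^{\wt\BF}$ on cohomology (checked in \ref{SUBSEC:Delta-F-homotopies}); this is not automatic for homotopy inner products, as the example in \ref{SUBSEC:PD-counterexample} shows.
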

\begin{proof}
We compute both BV algebras coming from $F$ and $\wt F$, respectively. Denote by $H^\bu:=H^\bu(S^2;\Z_2)$ and $H_\bu:=H_\bu(S^2;\Z_2)$. Then the bimodule map $F$ and the bimodule map $\wt F$ (with higher homotopies) induce respective (graded module) isomorphisms $\CF$ and $\wt{\CF}$ of Hochschild cohomologies (see \ref{SUBSEC:Delta-F-strict} and \ref{SUBSEC:Delta-F-homotopies})
\begin{equation}\label{EQU:Hoch-bimodule-map-H}
\CF, \wt{\CF}:
HH^\bu(H^\bu,H^\bu)\stackrel\cong\longrightarrow HH^\bu(H^\bu,H_\bu).
\end{equation}
Next, we use the explicit description (as reviewed in \ref{SUBSEC:HH(H,H)-and-cup} and \ref{SUBSEC:HH(H,H*)-and-B}) stating that
\begin{align*}
HH^\bu(H^\bu,H^\bu)\cong\bigoplus_{k\geq 0} \Z_2.\phi_k\oplus \bigoplus_{k\geq 0} \Z_2.\psi_k,
\quad\text{ where $|\phi_k|=k$, $|\psi_k|=k-2$,}
\\
HH^\bu(H^\bu,H_\bu)\cong\bigoplus_{k\geq 0} \Z_2.\theta_k\oplus \bigoplus_{k\geq 0} \Z_2.\chi_k,
\quad\text{ where $|\theta_k|=k+2$, $|\chi_k|=k$.}
\end{align*}
With this notation, $F$ induces in \eqref{EQU:Hoch-bimodule-map-H} the map $\CF(\phi_k)=\theta_k, \CF(\psi_k)=\chi_k$ (see \ref{SUBSEC:Delta-F-strict}), while $\wt{F}$ induces the map $\wt{\CF}(\phi_k)=\theta_k+\chi_{k+2}, \wt{\CF}(\psi_k)= \chi_k$ (computed in \ref{SUBSEC:Delta-F-homotopies}).

The BV $\Delta$ operator on $HH^\bu(H^\bu,H^\bu)$ is given by transferring Connes' $\CB$-operator from $HH^\bu(H^\bu,H_\bu)$ to $HH^\bu(H^\bu,H^\bu)$ via the given Poincar\'e duality bimodule isomorphism, where, Connes' $\CB$-operator is the map (computed in \ref{SUBSEC:HH(H,H*)-and-B}):
\[
\CB:HH^\bu(H^\bu,H_\bu)\to HH^\bu(H^\bu,H_\bu), \quad \CB( \theta_k)=0, \CB(\chi_k)=k\cdot \theta_{k-1}.
\]
Thus the induced operators $\Delta=\CF^{-1}\circ \CB\circ \CF$ and $\wt\Delta=\wt{\CF}^{-1}\circ \CB\circ \wt{\CF}$ from $F$ and $\wt F$, respectively, are:
\begin{align}
&\label{EQU:Delta-on-HH}
\Delta(\phi_k)=0, \quad\quad\quad \Delta(\psi_k)=\CF^{-1}(k\cdot \theta_{k-1})=k\cdot \phi_{k-1},
\\
&\label{EQU:tilde-Delta-on-HH}
\wt \Delta(\phi_{k-2})= \wt \Delta(\psi_{k})=\wt{\CF}^{-1}(k\cdot \theta_{k-1})=k\cdot(\phi_{k-1}+\psi_{k+1}),
\end{align}
where we used that $\wt{\CF}^{-1}$ is given by $\wt{\CF}^{-1}(\theta_k)=\phi_k+\psi_{k+2}$ and $\wt{\CF}^{-1}(\chi_k)=\psi_k$.

Note that the $\Delta$ operator in \eqref{EQU:Delta-on-HH} coincides with $\Delta$ from \eqref{EQU:HH(H(S2,Z2))}. To see that the BV algebra with $\wt\Delta$ is isomorphic to the string topology BV algebra \eqref{EQU:CH(LS2,Z2)}, we use an isomorphism which was essentially described in \cite[last paragraph of Lemma 21]{M1}: the map $\Theta:HH^\bu(H^\bu,H^\bu)\to \CSH_\bu(L S^2;\Z_2)$ with $\Theta(\phi_k)=\al_k+k\cdot \be_{k+2}$ and $\Theta(\psi_k)=\be_k$ is an algebra isomorphism such that $\Delta_{\text{ST}}\circ \Theta=\Theta\circ \wt\Delta$.
\end{proof}

\subsection{Organization of the paper}
In section \ref{SEC:Hochschild}, we review basics about Hochschild cochain complexes and morphisms between them coming from bimodule maps up to higher homotopies. In section \ref{SEC:BV-on-Hochschild} we review the induced BV algebra coming from a chosen Poincar\'e duality structure. In section \ref{SEC:compute-bimodule-maps} we give various computations of bimodule maps (with and without higher homotopies). In particular, the construction of a bimodule map (up to certain controlled structures) for the $2$-simplex coming from locality (see example \ref{EX:F-for-the-2-simplex}) constitutes the main computational aspect of this paper, with its proof being spelled out in appendix \ref{APP:Proof-of-HIP-on-2-simplex}. In section \ref{SEC:compute-Hochschild-BV} we compute the BV algebras coming from the Poincar\'e duality structures on cohomology of the $2$-sphere from section \ref{SEC:compute-bimodule-maps}, and with this complete the proof of theorem \ref{THM:main-theorem}.

\begin{ack}
We would like to thank Mahmoud Zeinalian for discussions about this topic. The second author was partially supported by a PSC-CUNY research award.
\end{ack}

\section{Bimodule maps with higher homotopies}\label{SEC:Hochschild}

In this section we review bimodule maps, bimodule maps up to higher homotopies, and homotopy inner products; see also \cite{T1}. Moreover we review the induced maps on Hochschild cohomology.

\subsection{Basic setup}\label{SUBSEC:basic-setup}
Denote by $R$ a commutative ring with unit; our main example of interest is $R=\Z_2$. Let $A$ be a unital dg-algebra over $R$, where we note that all the differentials in this paper will always go down, i.e., $d:A_j\to A_{j-1}$. We denote by $\un{A}$ the space $A$ shifted up by one, i.e., $\un{A}_j:=A_{j-1}$.

Let $\BM$ be a dg-bimodule over $A$. If we want to emphasize the corresponding algebra $A$, then we will also write $\BM\ov{A}$ instead of $\BM$. From now on, all modules as well as module maps will be written in bold.

Note that if $\BM$ is a dg-bimodule over $A$, then so is its dual space $\BM^*$ with $\BM^*_j:=(\BM_{-j})^*=Hom_R(\BM_{-j},R)$ with differential $d:\BM^*_j\to \BM^*_{j-1}, d(n)(m):=(-1)^{|n|+1}n(d(m))$, and module maps $(n.a)(m):=n(a.m)$ and $(a.n)(m):=(-1)^{|a|\cdot(|n|+|m|)}n(m.a)$ for $n\in \BM^*, m\in \BM, a\in A$, where $|.|$ denotes the degree. Moreover, the dg-algebra $A$ is itself a dg-module $\B{A}:=A$ over $A$ (with module structure given by the algebra multiplication), and thus $\B{A}^*$ is a dg-module over $A$ as well.

Define the Hochschild cochain complex of $A$ with values in $\BM$ to be given by $CH^\bu(A, \BM):=\prod_{r\geq 0 } Hom(\un{A}^{\otimes r},\BM)$, where the differential $\delt=\delt_0+\delt_1$, with $\delt^2=0$, is defined for $\varphi\in Hom(\un{A}^{\otimes r},\BM)$ by setting
\begin{multline*}
\delt_0(\varphi)(\un{a_1},\dots,\un{a_r}):= d(\varphi(\un{a_1},\dots,\un{a_r}))
+\sum_{j=1}^r (-1)^{|\varphi|+\eps{1}{j-1}}\cdot \varphi(\un{a_1},\dots,\un{da_j},\dots,\un{a_r}),\\
\delt_1(\varphi)(\un{a_1},\dots,\un{a_{r+1}}):=(-1)^{|\varphi|\cdot |\un{a_1}|}\cdot a_1. \varphi(\un{a_2},\dots,\un{a_{r+1}})\hspace{7cm}\\
 +\sum_{j=1}^r (-1)^{|\varphi|+\eps{1}{j}}\cdot \varphi(\un{a_1},\dots,\un{a_j\cdot a_{j+1}},\dots,\un{a_{r+1}})
  +(-1)^{ |\varphi|+1+\eps{1}{r}}\cdot  \varphi(\un{a_1},\dots,\un{a_{r}}). a_{r+1}.
\end{multline*}
We will mainly use the normalized Hochschild cochain complex $\NCH^\bu(A, \BM)$, which is the subcomplex of $CH^\bu(A, \BM)$ consisting of those $\varphi\in CH^\bu(A, \BM)$ which vanish when any of the inputs is the unit $\un 1\in\un A$. The inclusion $\NCH^\bu(A, \BM)\hookrightarrow CH^\bu(A, \BM)$ is a quasi-isomorphism; see \cite[1.5.7]{L}.

\subsection{Inner products and homotopy inner products}\label{SUBSEC:IP-and-HIP}
Let $\BM$ and $\B{N}$ be dg-bimodules over $A$, and let $\BF:\BM\to \B{N}$ be a dg-bimodule map. Then, there is an induced cochain map $CH(\BF):\NCH^\bu(A,\BM)\to \NCH^\bu(A,\B{N})$, $\varphi\mapsto \BF\circ \varphi$. An inner product on $\BM$ is a dg-bimodule map $\BF:\BM\to \BM^*$.

We will need a more general notion of inner product $\BF:\BM\to \BM^*$ which also allows for higher homotopies. To this end, consider a sequence of maps $\BF=\{\BF_{p,q}:\un{A}^{\ot p}\ot \BM\ot \un{A}^{\ot q}\to \BM^*\}_{p,q\geq 0}$, and define its differential by setting $\delt \BF=\{(\delt \BF)_{p,q}\}_{p,q\geq 0}$ to be $(\delt \BF)_{p,q}=(\delt_0 \BF)_{p,q}+(\delt_1 \BF)_{p,q}$, where $\forall a_1,\dots, a_p,b_1,\dots, b_q,\in A$ and $m,n\in \BM$:
\begin{align}\label{EQU:de0(F)}
&(\delt_0 \BF)_{p,q}(\un{a_1},\dots, \un{a_p}; m;\un{b_1}, \dots, \un{b_q})(n) \\
\nonumber &  :=\sum_{j=1}^p (-1)^{|\BF|+|\un{a_1}|+ \dots+|\un{a_{j-1}}| } \cdot 
\BF_{p,q}(\un{a_1},\dots, \un{da_j}, \dots, \un{a_p};m;\un{b_1},\dots, \un{b_q})(n)\\
\nonumber & \quad + (-1)^{|\BF|+|\un{a_1}|+ \dots+|\un{a_{p}}|+1 } \cdot 
\BF_{p,q}(\un{a_1},\dots, \un{a_p};dm;\un{b_1},\dots, \un{b_q})(n)\\
\nonumber & \quad +\sum_{j=1}^q (-1)^{|\BF|+|\un{a_1}|+ \dots+|\un{a_{p}}|+|m|+|\un{b_1}|+ \dots+|\un{b_{j-1}}| }\cdot 
 \BF_{p,q}(\un{a_1},\dots, \un{a_p};m;\un{b_1},\dots, \un{db_j},\dots,\un{b_q})(n)\\
\nonumber & \quad + (-1)^{|\BF|+|\un{a_1}|+ \dots+|\un{a_{p}}|+|m|+|\un{b_1}|+ \dots+|\un{b_{q}}|+1 }\cdot 
 \BF_{p,q}(\un{a_1}, \dots, \un{a_p};m;\un{b_1},\dots, \un{b_q})(dn)
\end{align}
and, using the notation $\BF_{-1,q}=\BF_{p,-1}=0$:
\begin{align}\label{EQU:de1(F)}
& (\delt_1 \BF)_{p,q}(\un{a_1},\dots, \un{a_p}; m;\un{b_1}, \dots, \un{b_q})(n) \\
\nonumber &  := (-1)^{|\BF|+|a_1|\cdot(|\un{a_2}|+ \dots+|\un{a_{p}}|+|m|+|\un{b_1}|+ \dots+|\un{b_{q}}|+|n|) } \cdot 
\BF_{p-1,q}(\un{a_2},\dots, \un{a_p};m;\un{b_1},\dots, \un{b_q})(n. a_1)\\
\nonumber & \quad +\sum_{j=1}^{p-1} (-1)^{|\BF|+|\un{a_1}|+ \dots+|\un{a_{j}}| } \cdot \BF_{p-1,q}(\un{a_1},\dots, \un{a_j\cdot a_{j+1}},\dots,\un{a_p};m;\un{b_1},\dots, \un{b_q})(n)\\
\nonumber & \quad + (-1)^{|\BF|+|\un{a_1}|+ \dots+|\un{a_{p-1}}|+1 } \cdot 
\BF_{p-1,q}(\un{a_1},\dots, \un{a_{p-1}};a_p. m;\un{b_1},\dots, \un{b_q})(n)\\
\nonumber & \quad + (-1)^{|\BF|+|\un{a_1}|+ \dots+|\un{a_{p}}|+|m| } \cdot 
\BF_{p,q-1}(\un{a_1},\dots, \un{a_p};m. b_1;\un{b_2},\dots, \un{b_q})(n)\\
\nonumber & \quad +\sum_{j=1}^{q-1} (-1)^{|\BF|+|\un{a_1}|+ \dots+|\un{a_{p}}|+|m|+|\un{b_1}|+ \dots+|\un{b_{j}}| } \cdot 
\BF_{p,q-1}(\un{a_1},\dots, \un{a_p};m;\un{b_1},\dots, \un{b_j\cdot b_{j+1}},\dots, \un{b_q})(n)\\
\nonumber & \quad + (-1)^{|\BF|+|\un{a_1}|+ \dots+|\un{a_{p}}|+|m|+|\un{b_1}|+ \dots+|\un{b_{q-1}}|+1 } \cdot 
\BF_{p,q-1}(\un{a_1},\dots, \un{a_p};m;\un{b_1},\dots, \un{b_{q-1}})(b_q. n)
\end{align}
Then we call $\BF$ a homotopy inner product for $\BM$ if $\delt\BF=0$. By slight abuse of notation, we will still use the notation $\BF:\B{M}\to \B{M}^*$ for $\BF=\{\BF_{p,q}\}_{p,q}$ with all its homotopies.

We will sometimes depict evaluations of $\BF$ as follows:
\begin{equation}
\BF_{p,q}(\un{a_1},\dots,\un{a_p};m;\un{b_1},\dots,\un{b_q})(n) 
\hspace{2cm}
\begin{tikzpicture}[scale=1, baseline=-1ex]
\draw
    (0,0) 
 -- (0,.8)  -- (0,0)
 -- (-.866,.5)  -- (0,0)
 -- (-.5,.866)  -- (0,0)
 -- (0,1)  -- (0,0)
 -- (.5,.866)  -- (0,0) 
 -- (.866,.5)  -- (0,0) 
 -- (0,-.8)  -- (0,0)
 -- (-.866,-.5)  -- (0,0)
 -- (-.5,-.866)  -- (0,0)
 -- (0,-1)  -- (0,0)
 -- (.5,-.866)  -- (0,0) 
 -- (.866,-.5)  -- (0,0)  ;
 \draw [very thick] (-1,0) -- (1,0);
\filldraw[color=black, fill=white, very thick] (0,0) circle (.12);
\draw (1.2,-.6) node {$a_1$};
\draw (.7,-1.1) node {$a_2$};
\draw (-.4,-1.2) node {$\dots$};
\draw (-1.2,-.6) node {$a_p$};
\draw (-1.3,0) node {$m$};
\draw (-1.2,.6) node {$b_1$};
\draw (-.7,1.1) node {$b_2$};
\draw (.4,1.2) node {$\dots$};
\draw (1.2,.6) node {$b_q$};
\draw (1.3,0) node {$n$};
\end{tikzpicture}
\end{equation}

Note, that a homotopy inner product $\BF=\{\BF_{p,q}\}_{p,q}$ with $\BF_{p,q}=0$ for $p+q>0$ is precisely an inner product, i.e., a dg-bimodule map $\BF_{0,0}:\BM\to \BM^*$.

Let $\BF$ be a homotopy inner product for $\BM$. We will always assume that $\BF$ vanishes when any of the algebra inputs is the unit $\un{1}\in \un{A}$. Then, there is an induced map $\CF:=CH(\BF):\NCH^\bu(A,\BM)\to \NCH^\bu(A,\BM^*)$ by setting $\CF=CH(\BF)=\sum_{p,q\geq 0}CH(\BF)_{p,q}$, where $CH(\BF)_{p,q}:Hom(\un{A}^{\otimes r},\BM)\to Hom(\un{A}^{\otimes p+r+q},\BM^*)$ is
\begin{multline}\label{EQU:CH(F)}
(CH(\BF)_{p,q}(\varphi))(\un{a_1},\dots,\un{a_{p+r+q}})\\
:=(-1)^{|\varphi|\cdot(|\un{a_1}|+\dots+|\un{a_p}|)}\cdot \BF_{p,q}(\un{a_1}, \dots, \un{a_p};\varphi( \un{a_{p+1}}, \dots,\un{a_{p+r}});\un{a_{p+r+1}},\dots,\un{a_{p+r+q}}).
\end{multline}
\[
\begin{tikzpicture}[scale=1, baseline=-1ex]
\draw
    (0,0) 
 -- (0,.8)  -- (0,0)
 -- (-1,.7)  -- (0,0)
 -- (-.5,.866)  -- (0,0)
 -- (0,1)  -- (0,0)
 -- (.5,.866)  -- (0,0) 
 -- (.866,.5)  -- (0,0) 
 -- (0,-.8)  -- (0,0)
 -- (-1,-.7)  -- (0,0)
 -- (-.5,-.866)  -- (0,0)
 -- (0,-1)  -- (0,0)
 -- (.5,-.866)  -- (0,0) 
 -- (.866,-.5)  -- (0,0)  ;
 \draw [very thick] (-2,0) -- (1.5,0);
\filldraw[color=black, fill=white, very thick] (0,0) circle (.12);
\draw (1.2,-.6) node {$a_1$};
\draw (.7,-1.1) node {$a_2$};
\draw (-.4,-1.2) node {$\dots$};
\draw (-1.3,-1) node {$a_p$};
\draw (-1.6,1) node {$a_{p+r+1}$};
\draw (0,1.2) node {$\dots$};
\draw (1.6,.8) node {$a_{p+r+q}$};
\draw
    (-2.2,0) 
 -- (-3,.8)  -- (-2.2,0)
 -- (-3.3,.3)  -- (-2.2,0)
 -- (-3.3,-.3)  -- (-2.2,0)
 -- (-3,-.8)  -- (-2.2,0);
\filldraw[color=black, fill=white] (-2.2,0) circle (.3); 
\draw (-2.2,0) node {$\varphi$};
\draw (-3.5,-1) node {$a_{p+1}$};
\draw (-3.5,1) node {$a_{p+r}$};
\draw (-3.6,-.1) node {$\vdots$};
\end{tikzpicture}
\]
A direct but lengthy calculation shows that 
\begin{equation}\label{EQU:delta(F)sharp}
CH(\delt \BF) (\varphi) = \delt\circ CH(\BF) (\varphi)-(-1)^{|\BF|}\cdot CH(\BF)\circ \delt(\varphi),\quad\quad \forall \varphi\in CH^\bu(A,\BM).
\end{equation}
\begin{cor}
If $\BF$ is a homotopy inner product, i.e., $\delt\BF=0$, then $\CF=CH(\BF):\NCH^\bu(A,\BM)\to \NCH^\bu(A,\BM^*)$ is a cochain map. By abuse of notation, we often write $\CF$ for the induced map on Hochschild cohomology. If $\BF=\delt\BF'$ for some $\BF'$, then the induced map on Hochschild cohomology vanishes, i.e., $HH(\BF)=0$.
\end{cor}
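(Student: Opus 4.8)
The plan is to deduce the entire corollary formally from the identity \eqref{EQU:delta(F)sharp}, which expresses $CH(\delt\BF)$ as the graded commutator of $CH(\BF)$ with the Hochschild differential; once that identity is established — it is the ``direct but lengthy calculation'' referred to above — nothing further is needed. Before invoking it I would record that $\CF=CH(\BF)$ does send the normalized complex $\NCH^\bu(A,\BM)$ into $\NCH^\bu(A,\BM^*)$: by \eqref{EQU:CH(F)}, an algebra input $\un 1$ of $CH(\BF)_{p,q}(\varphi)$ is fed either into an algebra slot of $\BF_{p,q}$, where $\BF$ vanishes by our standing assumption, or into an algebra slot of $\varphi$, where $\varphi$ vanishes by normalization.

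For the cochain-map statement, set $\delt\BF=0$ in \eqref{EQU:delta(F)sharp}. The left-hand side becomes $CH(0)(\varphi)=0$, so
\[
\delt\circ CH(\BF)(\varphi)=(-1)^{|\BF|}\cdot CH(\BF)\circ\delt(\varphi)\qquad\text{for all }\varphi\in\NCH^\bu(A,\BM),
\]
i.e.\ $\CF$ commutes with $\delt$ up to the Koszul sign forced by its degree $|\BF|$; thus $\CF$ is a (degree $|\BF|$) cochain map and descends to Hochschild cohomology.

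For the exactness statement, assume $\BF=\delt\BF'$, with $\BF'$ likewise vanishing on unit algebra inputs. Applying \eqref{EQU:delta(F)sharp} to $\BF'$ and evaluating on a cocycle $\varphi$ gives
\[
CH(\BF)(\varphi)=CH(\delt\BF')(\varphi)=\delt\bigl(CH(\BF')(\varphi)\bigr)-(-1)^{|\BF'|}\cdot CH(\BF')(\delt\varphi)=\delt\bigl(CH(\BF')(\varphi)\bigr),
\]
since $\delt\varphi=0$. Hence $CH(\BF)(\varphi)$ is a coboundary, so the induced map $HH(\BF)$ on Hochschild cohomology vanishes.

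The only genuine work lies in \eqref{EQU:delta(F)sharp} itself, which I expect to be the main obstacle: one must check that the many sign-laden summands of $(\delt_0\BF)_{p,q}$ and $(\delt_1\BF)_{p,q}$, after the reindexing built into \eqref{EQU:CH(F)}, reassemble precisely into $\delt\circ CH(\BF)(\varphi)-(-1)^{|\BF|}CH(\BF)\circ\delt(\varphi)$ — in particular that the internal differentials applied to $m=\varphi(\dots)$ and to $n$ account for the remaining $\delt_0$-terms, and that the contractions $n.a_1$ and $b_q.n$ together with the inner multiplications reproduce exactly the bimodule-structure terms of $\delt_1$ on the Hochschild side. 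Everything else is bookkeeping with signs.
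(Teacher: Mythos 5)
Your proposal is correct and matches the paper's (implicit) argument: the corollary is stated as an immediate consequence of \eqref{EQU:delta(F)sharp}, and you deduce both the cochain-map statement and the vanishing of $HH(\BF)$ for $\BF=\delt\BF'$ exactly as intended, with the normalization check being a harmless extra remark consistent with the paper's standing assumption that $\BF$ (and $\BF'$) vanish on unit inputs.
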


\subsection{Pullback under a dg-morphism}
Let $A$ and $B$ be two unital dg-algebras, and let $f:B\to A$ be a dg-algebra map (which is necessarily of degree $0$). Then any dg-bimodule $\BM=\BM\ov{A}$ over $A$ induces a dg-bimodule $\BM\ov{B}:=\BM$ over $B$ via the module structure $b.m:=f(b).m$ and $m.b:=m.f(b)$ for any $m\in \BM, b\in B$. Moreover, any dg-bimodule map $\BF\ov{A}:\BM\ov{A}\to \B{N}\ov{A}$ also induces a dg-bimodule map $\BF\ov{B}:\BM\ov{B}\to \B{N}\ov{B}$. Moreover, a homotopy inner product $\BF\ov{A}$ for $\BM\ov{A}$ also induces a homotopy inner product for $\BM\ov{B}$ via
\begin{equation*}
(\BF\ov{B})_{p,q}(\un{b_1},\dots, \un{b_p}; m;\un{c_1}, \dots, \un{c_q})(n) :=
(\BF\ov{A})_{p,q}(\un{f(b_1)},\dots, \un{f(b_p)}; m;\un{f(c_1)}, \dots, \un{f(c_q)})(n) 
\end{equation*}
for all $b_1,\dots, b_p,c_1,\dots, c_q,\in B$ and $m,n\in \BM\ov{B}$.

Moreover, for a dg-algebra map $f:B\to A$, we have the dg-modules $\B{B}\ov{B}$ and $\B{B}^*\ov{B}$, and, from $\B{A}\ov{A}$ and $\B{A}^*\ov{A}$, we also get $\B{A}\ov{B}$ and $\B{A}^*\ov{B}$. The dg-algebra map $f$ then induces a dg-bimodule map $\B{f}\ov{B}:\B{B}\ov{B}\to \B{A}\ov{B}$ and, by dualizing, the dg-bimodule map $\B{f}^*\ov{B}:\B{A}^*\ov{B}\to \B{B}^*\ov{B}$.

Combining the last two paragraphs, assume that $f:B\to A$ is a dg-algebra map, and that $\BF\ov{A}:\B{A}\ov{A}\to \B{A}^*\ov{A}$ is a homotopy inner product for $\B{A}$. Then, we get a transferred homotopy inner product $f(\BF)\ov{B}:\B{B}\ov{B}\to \B{B}^*\ov{B}$ for $\B{B}$, given by $f(\BF)\ov{B}:=\B{f}^*\ov{B}\circ \BF\ov{B}\circ \B{f}\ov{B}$,
\begin{equation*}
\xymatrix{ 
\B{B}\ov{B} \ar^{\B{f}\ov{B}}[rr] \ar_{f(\BF)\ov{B}}[d] && \B{A}\ov{B} \ar^{\BF\ov{B}}[d]  
\\ 
\B{B}^*\ov{B}  && \B{A}^*\ov{B}  \ar_{\B{f}^*\ov{B}}[ll] 
}
\end{equation*}
\begin{multline}\label{EQU:f(F)/B-as-F/A}
(f(\BF)\ov{B})_{p,q}(\un{b_1},\dots, \un{b_p}; m;\un{c_1}, \dots, \un{c_q})(n)
\\ =
(\BF\ov{A})_{p,q}(\un{f(b_1)},\dots, \un{f(b_p)}; f(m);\un{f(c_1)}, \dots, \un{f(c_q)})(f(n)),
\end{multline}
where $b_1,\dots, b_p,m,c_1,\dots, c_q,n\in B$.

Finally, for a dg-algebra map $f:B\to A$, and a dg-bimodule $\BM\ov{A}$, there is an induced cochain map on Hochschild cochains, $CH(f,\BM):\NCH^\bu(A,\BM\ov{A})\to\NCH^\bu(B,\BM\ov{B})$,
\begin{equation}\label{EQU:CH(f)-Def}
CH(f;\BM)(\varphi)(\un{b_1},\dots,\un{b_r}):=\varphi(\un{f(b_1)},\dots,\un{f(b_r)}), \quad\quad \forall \varphi\in\NCH^\bu(A,\BM\ov{A}), b_1,\dots, b_r\in B.
\end{equation}
With this we get the following lemma.
\begin{lem}
If $f:B\to A$ is a dg-algebra map, and $\BF\ov{A}:\B{A}\ov{A}\to \B{A}^*\ov{A}$ is a homotopy inner product for $\B{A}\ov{A}$, then the following diagram commutes.
\begin{equation}\label{EQU:commutative-Hoch-for-F}
\xymatrix{ 
\NCH^\bu(B,\B{B}\ov{B}) \ar^{CH(\B{f}\ov{B})}[rr] \ar_{CH(f(\BF)\ov{B})}[d] && \NCH^\bu(B,\B{A}\ov{B}) \ar^{CH(\BF\ov{B})}[d]  && \NCH^\bu(A,\B{A}\ov{A}) \ar_{CH(f,\B{A})}[ll] \ar^{CH(\BF\ov{A})}[d] 
\\ 
\NCH^\bu(B,\B{B}^*\ov{B})   && \NCH^\bu(B,\B{A}^*\ov{B})  \ar_{CH(\B{f}^*\ov{B})}[ll] && \NCH^\bu(A,\B{A}^*\ov{A}) \ar_{CH(f,\B{A}^*)}[ll]
}
\end{equation}
\end{lem}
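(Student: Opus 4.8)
The plan is to verify that the diagram \eqref{EQU:commutative-Hoch-for-F} commutes by checking each of its three squares separately, working directly from the explicit formulas \eqref{EQU:CH(F)}, \eqref{EQU:f(F)/B-as-F/A}, and \eqref{EQU:CH(f)-Def}, and using that each of these constructions is functorial in the obvious sense. The key observation is that all three vertical maps $CH(f(\BF)\ov{B})$, $CH(\BF\ov{B})$, $CH(\BF\ov{A})$ are of the form $CH(-)$ applied to a homotopy inner product, so each vertical arrow's definition is an instance of \eqref{EQU:CH(F)}, and the composite $\B{f}^*\ov{B}\circ \BF\ov{B}\circ \B{f}\ov{B}$ defining $f(\BF)\ov{B}$ was already spelled out componentwise in \eqref{EQU:f(F)/B-as-F/A}.

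First I would treat the left square. Starting from $\varphi\in\NCH^\bu(B,\B{B}\ov{B})$, I would expand $CH(\B{f}^*\ov{B})\circ CH(\BF\ov{B})\circ CH(\B{f}\ov{B})$ applied to $\varphi$ using \eqref{EQU:CH(F)} and the fact that $CH(\B{f}\ov{B})$ and $CH(\B{f}^*\ov{B})$ are the maps $\psi\mapsto \B{f}\ov{B}\circ\psi$ and $\psi\mapsto\B{f}^*\ov{B}\circ\psi$ of subsection~\ref{SUBSEC:IP-and-HIP}; the signs match because $\B{f}\ov{B}$ and $\B{f}^*\ov{B}$ have degree $0$. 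Evaluating on a string $(\un{b_1},\dots,\un{b_{p+r+q}})$ and comparing with $CH(f(\BF)\ov{B})(\varphi)$ via \eqref{EQU:f(F)/B-as-F/A}, one sees both sides equal $(-1)^{|\varphi|\cdot(|\un{b_1}|+\dots+|\un{b_p}|)}\,(\BF\ov{A})_{p,q}(\un{f(b_1)},\dots,\un{f(b_p)};f(\varphi(\un{b_{p+1}},\dots,\un{b_{p+r}}));\un{f(b_{p+1+r})},\dots,\un{f(b_{p+r+q})})(f(-))$, where one uses that $\varphi$ lands in $\B{B}\ov{B}=B$ and $\B{f}\ov{B}$ is just the application of $f$ on that slot, while $\B{f}^*\ov{B}$ is precomposition with $f$ on the $n$-slot. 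So the left square commutes.

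Next the right square: here I would use that $CH(f;\B{A})$ and $CH(f;\B{A}^*)$ of \eqref{EQU:CH(f)-Def} simply substitute $f(b_i)$ for each algebra input, and that $CH(\BF\ov{A})$ and $CH(\BF\ov{B})$ are given by the same formula \eqref{EQU:CH(F)} with $\BF\ov{B}$ obtained from $\BF\ov{A}$ by precomposing each algebra slot with $f$ (the displayed defining formula for $\BF\ov{B}$ in the ``Pullback under a dg-morphism'' paragraph). Chasing $\varphi\in\NCH^\bu(A,\B{A}\ov{A})$ around both ways and evaluating on $(\un{b_1},\dots,\un{b_{p+r+q}})$, both composites produce $(-1)^{|\varphi|\cdot(\dots)}(\BF\ov{A})_{p,q}(\un{f(b_1)},\dots;\varphi(\un{f(b_{p+1})},\dots);\dots)$; here it is essential that $\B{A}^*\ov{B}$ as a $B$-bimodule is the pullback of $\B{A}^*\ov{A}$, so that the $\B{A}^*$-valued output is regarded over $B$ in the same way on both paths. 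One small point to track is that the inner $\varphi$ is evaluated on $\un{f(b)}$'s in one order of composition and the $\BF$ is then evaluated on those outputs, versus evaluating $\BF\ov{B}$ which has $f$ built into its algebra slots — these coincide exactly because $\BF\ov{B}$ does not alter the module slots $m,n$. The middle (lower-left, trivial-looking) identifications are then automatic.

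The step I expect to be the main obstacle is bookkeeping the Koszul signs consistently: the sign in \eqref{EQU:CH(F)} depends on $|\varphi|$ and on the degrees of the first $p$ algebra inputs, and one must make sure that under the substitution $a_i\rightsquigarrow f(b_i)$ the degrees are preserved (they are, since $f$ has degree $0$), so that the prefactor $(-1)^{|\varphi|\cdot(|\un{a_1}|+\dots+|\un{a_p}|)}$ transported along $CH(f;-)$ agrees with the prefactor $(-1)^{|\varphi|\cdot(|\un{b_1}|+\dots+|\un{b_p}|)}$ appearing when one computes with $\BF\ov{B}$ directly. Similarly, for the left square one must check that wrapping with the degree-$0$ maps $\B{f}\ov{B}$ and $\B{f}^*\ov{B}$ introduces no sign, which is immediate from $\varphi\mapsto \B{f}\ov{B}\circ\varphi$ being a strict cochain map. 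Once the signs are pinned down, each square is a direct unravelling of the definitions with no further input needed.
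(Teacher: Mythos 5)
Your proposal is correct: both squares commute by a direct term-by-term unwinding of \eqref{EQU:CH(F)}, \eqref{EQU:f(F)/B-as-F/A} and \eqref{EQU:CH(f)-Def}, with no sign issues since $f$, $\B{f}\ov{B}$ and $\B{f}^*\ov{B}$ all have degree $0$. This is precisely the argument the paper intends — it states the lemma as an immediate consequence of the preceding definitions and omits the check — so your write-up simply supplies the routine verification.
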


\subsection{Isomorphism on Hochschild cohomology}
A particular case of interest is when a homotopy inner product $\BF:\B{A}\to \B{A}^*$ induces an isomorphism on Hochschild cohomology $\CF:=HH(\BF):HH^\bu(A,\B{A})\to HH^\bu(A,\B{A}^*)$. In this case, we can transfer any structure between these Hochschild cohomologies, in particular we can transfer the $\CB$ operator as it was done in theorem \ref{THM:main-theorem}.

Note that in equation \eqref{EQU:commutative-Hoch-for-F}, if the four horizontal maps are quasi-isomorphisms, then, obviously, the right vertical map $CH(\BF\ov{A})$ is a quasi-isomorphism iff the left vertical map $CH(f(\BF)\ov{B})$ is a quasi-isomorphism. 

\section{BV algebra on Hochschild cohomology}\label{SEC:BV-on-Hochschild}

We now review how homotopy inner products induce a BV algebra on Hochschild cohomology; see theorem \ref{THM:BV-on-Hoch}. Our main reference for this is \cite{T2}. We start by defining two operators $\CB$ and $\Delta^\BF$.

\begin{defn}
Consider a unital dg-algebra $A$.
\begin{enumerate}
\item We define Connes' $\CB$-operator (or more precisely the dual of Connes' $\CB$-operator) to be $\CB:\NCH^\bu(A,\B{A}^*)\to \NCH^\bu(A,\B{A}^*)$ given for $\varphi\in Hom(\un{A}^{\otimes r}, \B{A}^*)$ by $\CB(\varphi)\in Hom(\un{A}^{\otimes r-1}, \B{A}^*)$ with
\begin{multline}\label{EQU:B-operator-Def}
(\CB(\varphi))(\un{a_1},\dots, \un{a_{r-1}})(a_r)\\
:= \sum_{j=1}^r (-1)^{(\eps{j}{r})\cdot (\eps{1}{j-1})+|\un{a_r}|}
\cdot \varphi(\un{a_j},\dots, \un{a_r},\un{a_1},\dots, \un{a_{j-1}})(1),
\end{multline}
A direct but lengthy computation shows that $\delt\circ \CB(\varphi)=-\CB\circ \delt(\varphi)$. By abuse of notation, we denote the induced map on Hochschild cohomology by $\CB:HH^\bu(A,\B{A}^*)\to HH^\bu(A,\B{A}^*)$ as well.
\item
Next, assume that we also have a homotopy inner product $\BF=\{\BF_{p,q}:\un{A}^{\ot p}\ot \B{A}\ot \un{A}^{\ot q}\to \B{A}^*\}_{p,q\geq 0}$. Then, we define the operator $\CZ^\BF:=\sum\limits_{p,q\geq 0}\CZ^\BF_{p,q}:\NCH^\bu(A,\B{A})\to \NCH^\bu(A,\B{A}^*)$, where, for $\varphi\in Hom(\un{A}^{\otimes r}, \B{A})$, we set $\CZ^\BF_{p,q}(\varphi)\in Hom(\un{A}^{\otimes p+r+q-1},\B{A}^*)$ to be given by
\begin{align}\label{EQU:DEF-of-ZF}
&(\CZ^\BF_{p,q}(\varphi))(\un{a_1},\dots, \un{a_{p+r+q-1}})(a_{p+r+q})
\\
\nonumber &:=\sum_{j=p+1}^{p+q} (-1)^{|\BF|+(|\varphi|+1)\cdot (|\un{a_1}|+\dots+|\un{a_{j-1}}|)+1}
\\
\nonumber & \quad\quad \cdot \BF_{p,q}\Big(\un{a_1},\dots, \un{a_{p}};1; \un{a_{p+1}},\dots,\un{a_{j-1}},\un{\varphi(\un{a_{j}},\dots, \un{a_{j+r-1}})},\un{a_{j+r}},\dots,\un{a_{p+r+q-1}}\Big)\Big(a_{p+r+q}\Big)
\\
\nonumber &\quad+\sum_{j=1}^r (-1)^{(\eps{j}{p+r+q})\cdot (\eps{1}{j-1})+|\un{a_{p+r+q}}|+|\varphi| \cdot (|\un{a_{j}}|+\dots+|\un{a_{j+p+q-1}}|)}
\\
\nonumber & \quad\quad \cdot   \BF_{p,q}\Big(\un{a_{j}},\dots,\un{a_{j+p-1}};1; \un{a_{j+p}},\dots,\un{a_{j+p+q-1}}\Big)\Big(\varphi(\un{a_{p+j+q}},\dots, \un{a_{p+r+q}},\un{a_1},\dots, \un{a_{j-1}})\Big)
\\
\nonumber &\quad +\sum_{j=1}^{p} (-1)^{|\BF|+(|\varphi|+1)\cdot (\eps{1}{j-1})+1} 
\\
\nonumber & \quad\quad
\cdot\BF_{p,q}\Big(\un{a_1},\dots,\un{a_{j-1}},\un{\varphi(\un{a_{j}},\dots, \un{a_{j+r-1}})},\un{a_{j+r}},\dots, \un{a_{p+r-1}};1;\un{a_{p+r}},\dots,\un{a_{p+r+q-1}} \Big)\Big(a_{p+r+q}\Big)
\end{align}
One can check again that $\delt\circ \CZ^\BF(\varphi)=-(-1)^{|\BF|}\cdot \CZ^\BF\circ \delt(\varphi)$. By abuse of notation, we denote the induced map on Hochschild cohomology by $\CZ^\BF:HH^\bu(A,\B{A})\to HH^\bu(A,\B{A}^*)$ as well.

We remark that $\CZ^\BF$ has appeared in \cite[lemma 17]{T2} as the operation associated to the symbol
\[
\begin{tikzpicture}[scale=1, baseline=0ex]
\draw   (-2.4,.8)  -- (-2.4,0) -- (-3.2,0);
\draw [very thick]  (-1.6,0)  -- (-2.4,0);
\filldraw[color=black, fill=white] (-2.4,0) circle (.12);
\filldraw[color=black, fill=white] (-2.4,.8) circle (.3); 
\draw (-4.2,0) node {$(-1)^\mu\cdot 1$};
\draw (-2.4,.8) node {$1$};
\draw   (-.8,0)  -- (.8,0);
\draw [very thick]  (1,0)  -- (1.8,0);
\filldraw[color=black, fill=white] (0,0) circle (.12);
\filldraw[color=black, fill=white] (1,0) circle (.3); 
\draw (1,0) node {$1$};
\draw (-1.2,0) node {$+1$};
\draw (2.2,0) node {$+1$};
\draw   (2.6,0)  -- (3.4,0) -- (3.4,-.8);
\draw [very thick]  (3.4,0)  -- (4.2,0);
\filldraw[color=black, fill=white] (3.4,0) circle (.12);
\filldraw[color=black, fill=white] (3.4,-.8) circle (.3); 
\draw (3.4,-.8) node {$1$};
\end{tikzpicture}
\]
\item
Let $\BF$ be a homotopy inner product for $\B{A}$. Assume, moreover, that $\BF:\B{A}\to\B{A}^*$ induces an isomorphism on Hochschild cohomology, $\CF:=HH(\BF):HH^\bu(A,\B{A})\to HH^\bu(A,\B{A}^*)$. Then denote by $\De^\BF:HH^\bu(A,\B{A})\to HH^\bu(A,\B{A})$ the composition $\De^\BF:=\CF^{-1}\circ \CZ^\BF$,
\[
HH^\bu(A,\B{A})\stackrel{\CZ^\BF}\longrightarrow HH^\bu(A,\B{A}^*)\stackrel{\CF^{-1}}\longrightarrow HH^\bu(A,\B{A})
\]
\end{enumerate}
\end{defn}

\begin{lem}\label{LEM:BV-Delta-conditions}
Let $A$ be a unital dg-algebra.
\begin{enumerate}
\item\label{ITEM:B2=0} On Hochschild cochains $HH^\bu(A,\B{A}^*)$, we have that $\CB^2=0$.
\item\label{ITEM:Delta-F-deviation} Assume that $\BF:\B{A}\to\B{A}^*$ is a homotopy inner product which induces an isomorphism on Hochschild cohomology $\CF:HH^\bu(A,\B{A})\to HH^\bu(A,\B{A}^*)$. Then the deviation of $\Delta^\BF$ from being a derivation of the cup product  is the usual Gerstenhaber bracket.

Here the cup product on Hochschild cohomology is given for Hochschild cochains $\varphi\in Hom(\un{A}^{\otimes r},\B{A})$ and $\rho\in Hom(\un{A}^{\otimes s},\B{A})$ to be $\varphi\smile \rho\in Hom(\un{A}^{\otimes {r+s}},\B{A})$ with
\begin{equation}\label{EQU:cup-product}
(\varphi\smile \rho)(\un{a_1},\dots, \un{a_{r+s}}):=(-1)^{|\rho|\cdot(|\un{a_1}|+\dots+ |\un{a_{r}}|)}\cdot \varphi(\un{a_1},\dots, \un{a_{r}})\cdot \rho(\un{a_{r+1}},\dots, \un{a_{r+s}}).
\end{equation}
\end{enumerate}
\end{lem}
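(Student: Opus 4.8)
The plan is to prove both items by reducing them to known facts about the (classical, $\B{A}$-coefficient) Hochschild cochain complex and then tracking how the homotopy inner product $\BF$ intertwines the relevant operators. For item \eqref{ITEM:B2=0}, I would observe that $\CB$ as defined in \eqref{EQU:B-operator-Def} is (up to the sign conventions and the dualization into $\B{A}^*$ used here) the dual of the classical Connes operator $B$ on the cyclic bar complex. The identity $\CB^2 = 0$ on cochains already holds at the chain level after passing to the normalized complex: the standard computation shows $B^2$ is a boundary (in fact identically zero on normalized chains, since $B$ lands in the image of the "insert a $1$" map and $B$ of anything in that image vanishes). Dualizing and using that we have restricted to $\NCH^\bu(A,\B{A}^*)$ — where cochains vanish on a unit input — gives $\CB^2 = 0$ already on the nose at the cochain level, hence a fortiori on cohomology. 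So item \eqref{ITEM:B2=0} is essentially bookkeeping with the explicit formula.

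For item \eqref{ITEM:Delta-F-deviation}, the strategy is to compare $\Delta^\BF = \CF^{-1}\circ\CZ^\BF$ with the classical BV/Gerstenhaber picture. When $\BF = \BF_{0,0}$ is a strict inner product (dg-bimodule isomorphism $\B{A}\to\B{A}^*$), the operator $\CZ^\BF$ reduces, via the formula \eqref{EQU:DEF-of-ZF} with only the $p=q=0$ term surviving, to $\CF\circ B'$ where $B'$ is Connes' operator transported to $\B{A}$-coefficients; thus $\Delta^\BF = \CF^{-1}\circ\CF\circ B' = B'$, and the statement becomes the classical theorem (due to Daletskii–Gelfand–Tsygan, see also Menichi, Tradler \cite{T2}) that $B'$ is a BV operator for the cup product with defect equal to the Gerstenhaber bracket: $[\varphi,\rho] = \pm\big(\Delta^\BF(\varphi\smile\rho) - \Delta^\BF(\varphi)\smile\rho - (-1)^{|\varphi|}\varphi\smile\Delta^\BF(\rho)\big)$. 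For general $\BF$ with higher homotopies, I would invoke the naturality built into Section \ref{SEC:Hochschild}: the point of the three-term symbol for $\CZ^\BF$ (the picture after \eqref{EQU:DEF-of-ZF}) is precisely that $\CZ^\BF$ is chain homotopic, through the operad/PROP of operations acting on $\NCH^\bu$, to the composite $\CF\circ B'$, the homotopy being supplied by the higher components $\BF_{p,q}$. This is exactly the content isolated in \cite[Lemma 17]{T2}, which I am permitted to assume. Consequently $\Delta^\BF = \CF^{-1}\circ\CZ^\BF$ equals $B'$ on Hochschild cohomology, and the Gerstenhaber-bracket defect formula for $B'$ on $HH^\bu(A,\B{A})$ is again the classical statement.

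Concretely, the steps in order: (i) write out $\CB^2$ from \eqref{EQU:B-operator-Def} on a normalized cochain and check the sum telescopes/cancels, using that cyclic rotation by $r$ is the identity and that the "$(1)$" slot kills any term where a unit reappears — this settles \eqref{ITEM:B2=0}; (ii) record that for $\BF=\BF_{0,0}$ strict, $\CZ^\BF = \CF\circ B'$ on the nose, so $\Delta^\BF = B'$; (iii) cite \cite[Lemma 17]{T2} to upgrade this to $\Delta^\BF = B'$ on cohomology for arbitrary homotopy inner product $\BF$ inducing $\CF$ an isomorphism; (iv) quote the classical identity that $B'$ (equivalently, the Connes operator on $HH^\bu(A,\B{A})$ transported via Poincaré duality) is a generator of the Gerstenhaber bracket via the cup product \eqref{EQU:cup-product}, i.e. its deviation from being a derivation is $[-,-]$. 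The main obstacle I anticipate is step (iii): one must be careful that the chain homotopy from $\CZ^\BF$ to $\CF\circ B'$ is compatible with $\delt$ (so that it descends to cohomology) and that $\CF$ being a quasi-isomorphism is genuinely used to make $\CF^{-1}$ well-defined on $HH^\bu$; this is where the sign-laden identities $\delt\circ\CZ^\BF = -(-1)^{|\BF|}\CZ^\BF\circ\delt$ and \eqref{EQU:delta(F)sharp} do the real work, and verifying the homotopy explicitly (rather than merely citing \cite{T2}) would be the lengthy part. Everything else is formal manipulation of the explicit formulas together with the classical BV structure on Hochschild cohomology of a Poincaré-duality algebra.
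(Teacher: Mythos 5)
Your argument for item \eqref{ITEM:B2=0} is correct and is the same as the paper's: writing out $\CB^2$ from \eqref{EQU:B-operator-Def}, every term evaluates the normalized cochain with a unit $\un 1$ among its inputs, so $\CB^2=0$ already on $\NCH^\bu(A,\B{A}^*)$.

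For item \eqref{ITEM:Delta-F-deviation}, however, your route has a genuine gap. Your steps (ii)--(iv) argue that $\CZ^\BF$ is chain homotopic to $\CF\circ B'$, where $B'$ is ``Connes' operator transported to $\B{A}$-coefficients,'' so that $\Delta^\BF=\CF^{-1}\circ\CZ^\BF$ agrees on cohomology with the transported Connes operator, and you then quote the classical BV statement for that operator. But there is no intrinsic Connes operator on $HH^\bu(A,\B{A})$; the only available transport is via $\CF$ itself, so your intermediate claim is exactly the identity $\Delta^\BF=\CF^{-1}\circ\CB\circ\CF$, i.e.\ condition \eqref{ITEM:B=Delta} of definition \ref{DEF:PD-structure}. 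That identity is \emph{not} a consequence of $\CF$ being an isomorphism: the paper constructs in \ref{SUBSEC:PD-counterexample} a homotopy inner product (with components $\BF_{0,0}$ and $\BF_{3,0}$) for which $\CF$ is an isomorphism but $\CF^{-1}\circ\CB\circ\CF\neq\Delta^\BF$ on cohomology. Your appeal to \cite[Lemma 17]{T2} is also a misattribution: as used in this paper, that lemma treats homotopy inner products that are invariant under the cyclic rotation of inputs (and shows those are Poincar\'e duality structures); it does not supply a homotopy between $\CZ^\BF$ and $\CF\circ\CB'$ for an arbitrary $\BF$, and by the counterexample no such homotopy can exist in general.

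The statement of item \eqref{ITEM:Delta-F-deviation} is weaker than what you tried to prove: it only asserts that the deviation of $\Delta^\BF=\CF^{-1}\circ\CZ^\BF$ from being a derivation of the cup product \eqref{EQU:cup-product} is the Gerstenhaber bracket, with no comparison to $\CB$ at all. The paper's proof simply invokes \cite[section 3.3]{T2}, where this is established by a direct chain-level computation relating $\CZ^\BF$, the cup product, and the brace/Gerstenhaber operations, valid for any homotopy inner product inducing an isomorphism $\CF$. If you want to argue this yourself rather than cite it, the computation must be carried out with the explicit formula \eqref{EQU:DEF-of-ZF} for $\CZ^\BF$; reducing to the strict (automatically symmetric) case does not suffice, precisely because the passage from strict to general $\BF$ is where the transported-$\CB$ picture breaks down.
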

\begin{proof}
For \eqref{ITEM:B2=0}, note that $\CB^2=0$ follows since normalized Hochschild cochains vanish when any input is the unit $\un{1}$. Part \eqref{ITEM:Delta-F-deviation} was proved in \cite[section 3.3]{T2}.
\end{proof}

Since lemma \ref{LEM:BV-Delta-conditions} \eqref{ITEM:B2=0} and \eqref{ITEM:Delta-F-deviation} are conditions needed for a BV algebra, i.e., a square zero operator (here: $\CB$) whose deviation from being a derivation is a Gerstenhaber bracket (here: $\De^\BF$), we make the following definition.
\begin{defn}\label{DEF:PD-structure}
Let $A$ be a dg-algebra, and let $\BF:\B{A}\to \B{A}^*$ be a homotopy inner product for $\B{A}$. We call $\BF$ a Poincar\'e duality structure for $A$, if it satisfies if it satisfies the following two conditions:
\begin{enumerate}
\item\label{ITEM:Fsharp-HH-iso}
$\BF$ induces an isomorphism of graded modules on Hochschild cohomology $\CF=HH(\BF):HH^\bu(A,\B{A})\stackrel{\cong}\longrightarrow HH^\bu(A,\B{A}^*)$.
\item\label{ITEM:B=Delta}
Transferring $\CB$ from $HH^\bu(A,\B{A}^*)$ to $HH^\bu(A,\B{A})$ via $\CF$ equals $\De^\BF$, i.e.,
\[
\xymatrix{ 
HH^\bu(A,\B{A}) \ar@/_2.0pc/@[][d]_{\CF} 
\\
HH^\bu(A,\B{A}^*)  \ar@/_2.0pc/@[][u]_{\CF^{-1}} \ar@{->}@(lu,ru)^{\CB}
}
\hspace{6mm}
\raisebox{-7mm}{$\CF^{-1}\circ \CB \circ \CF=\De^\BF=\CF^{-1}\circ \CZ^\BF$}
\hspace{6mm}
\xymatrix{ 
HH^\bu(A,\B{A}) \ar@{->}@(ld,rd)_{\De^\BF}
\\
\quad
}
\raisebox{-7mm}{$=$}
\xymatrix{ 
HH^\bu(A,\B{A}) \ar@/_2.0pc/@[][d]^{\CZ^\BF} 
\\
HH^\bu(A,\B{A}^*)  \ar@/_2.0pc/@[][u]^{\CF^{-1}}
}
\]
\end{enumerate}
\end{defn}

With this, we obtain the following theorem, which is \cite[theorem 2]{T2}.
\begin{thm}\label{THM:BV-on-Hoch}
Let $A$ be a dg-algebra, and let $\BF:\B{A}\to \B{A}^*$ be a Poincar\'e duality structure for $A$.
Then, $HH^\bu(A,\B{A})$ together with the cup product and $\De^\BF$ is a BV algebra.
\end{thm}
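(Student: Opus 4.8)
The plan is to verify that the homotopy inner product $\BF$, being a Poincar\'e duality structure, satisfies exactly the axioms needed for $(HH^\bu(A,\B{A}), \smile, \De^\BF)$ to be a BV algebra, so the proof reduces to checking that the BV axioms hold and are equivalent (under $\CF$) to the already-established properties of $\CB$. First I would recall the definition of a BV algebra: a graded-commutative associative algebra equipped with a degree $+1$ operator $\Delta$ with $\Delta^2 = 0$, such that the deviation of $\Delta$ from being a derivation, i.e., the bilinear operator $(x,y)\mapsto \Delta(x\smile y) - \Delta(x)\smile y - (-1)^{|x|} x\smile \Delta(y)$, is a derivation in each variable and coincides with a Gerstenhaber bracket compatible with $\smile$ — equivalently, that this deviation equals the Gerstenhaber bracket already present on $HH^\bu(A,\B A)$. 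So I must show: (i) $HH^\bu(A,\B A)$ with $\smile$ is a graded-commutative associative algebra; (ii) $\De^\BF$ is a well-defined degree $+1$ operator with $(\De^\BF)^2 = 0$; (iii) the deviation of $\De^\BF$ from being a $\smile$-derivation is the Gerstenhaber bracket.

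For (i), this is the classical fact that Hochschild cohomology $HH^\bu(A,\B A)$ with the cup product \eqref{EQU:cup-product} is a graded-commutative ring (Gerstenhaber); I would simply cite this. For (ii), the operator $\De^\BF = \CF^{-1}\circ \CZ^\BF$ is well-defined on cohomology because $\CF$ is an isomorphism by Definition~\ref{DEF:PD-structure}\eqref{ITEM:Fsharp-HH-iso}, and both $\CF$ and $\CZ^\BF$ are chain maps (shown in the displayed identities $\delt\circ\CZ^\BF(\varphi) = -(-1)^{|\BF|}\CZ^\BF\circ\delt(\varphi)$ and \eqref{EQU:delta(F)sharp}). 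The key point is that Definition~\ref{DEF:PD-structure}\eqref{ITEM:B=Delta} gives $\De^\BF = \CF^{-1}\circ \CB\circ \CF$; since $\CB^2 = 0$ on $HH^\bu(A,\B A^*)$ by Lemma~\ref{LEM:BV-Delta-conditions}\eqref{ITEM:B2=0}, conjugating by the isomorphism $\CF$ immediately yields $(\De^\BF)^2 = \CF^{-1}\circ\CB^2\circ\CF = 0$. The degree of $\De^\BF$ being $+1$ follows from tracking degrees through $\CZ^\BF$ (the operator drops one tensor factor, matching Connes' $\CB$). For (iii), this is precisely the content of Lemma~\ref{LEM:BV-Delta-conditions}\eqref{ITEM:Delta-F-deviation}, which states that the deviation of $\De^\BF$ from being a derivation of $\smile$ is the usual Gerstenhaber bracket; that lemma in turn cites \cite[section 3.3]{T2}.

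Assembling these: $HH^\bu(A,\B A)$ is graded-commutative associative (i); $\De^\BF$ is a square-zero degree $+1$ operator (ii); and its failure to be a derivation of $\smile$ is the Gerstenhaber bracket, which is itself a biderivation satisfying the Jacobi and Poisson identities on $HH^\bu(A,\B A)$ by Gerstenhaber's theorem (iii). These are exactly the axioms of a BV algebra, so the theorem follows. The one subtlety worth spelling out is why $\CB^2 = 0$ transfers correctly: one needs that the identification of $\De^\BF$ with the conjugate of $\CB$ holds genuinely on cohomology (not merely up to homotopy in a way that would obstruct squaring), which is guaranteed because Definition~\ref{DEF:PD-structure}\eqref{ITEM:B=Delta} is an honest equality of operators on $HH^\bu(A,\B A)$. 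I do not expect a serious obstacle here: the real work has already been done in Lemma~\ref{LEM:BV-Delta-conditions} and in \cite{T2}; this theorem is the formal repackaging of those facts into the statement ``Poincar\'e duality structure $\Rightarrow$ BV algebra,'' and the main thing to get right is simply the bookkeeping of which axiom comes from which earlier statement.
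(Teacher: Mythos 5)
Your proposal is correct and follows essentially the same route as the paper, which frames Definition \ref{DEF:PD-structure} precisely so that lemma \ref{LEM:BV-Delta-conditions} (parts \eqref{ITEM:B2=0} and \eqref{ITEM:Delta-F-deviation}) plus the isomorphism $\CF$ and the equality $\De^\BF=\CF^{-1}\circ\CB\circ\CF$ assemble into the BV axioms, and then simply cites \cite[theorem 2]{T2} for the statement. Your write-up just makes this bookkeeping explicit, which is fine.
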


\begin{exs}
We now give a few examples for how one can check whether a homotopy inner product is a Poincar\'e duality structure.
\begin{enumerate}
\item\label{ITME:IP-are-PD}
Let $\BF$ be an inner product for a dg-algebra $A$ (i.e., the only non-zero component is $\BF_{0,0}$), and assume that $\CF$ induces an isomorphism on Hochschild cohomology. Then $\BF$ is a Poincar\'e duality structure.

(To see this, we note that this is a special case of the next example \eqref{ITEM: weakly-symmetric-are-PD} below, since the condition $(\delt_1 \BF)_{0,1}=0$ gives (using \eqref{EQU:de1(F)}) that $\BF(mb)(n)=\BF(m)(bn)$, and $(\delt_1 \BF)_{1,0}=0$ gives $\BF(am)(n)=(-1)^{|a|\cdot (|m|+|n|)}\BF(m)(na)$ for all $m,n,a,b\in A$, and, thus $\BF(m)(n)=\BF(1\cdot m)(n)=\BF(1)(m\cdot n)=(-1)^{|m|\cdot |n|}\BF(n\cdot 1)(m)=(-1)^{|m|\cdot |n|}\BF(n)(m)$.)
\item\label{ITEM: weakly-symmetric-are-PD}
Let $\BF$ be a homotopy inner product for a dg-algebra $A$, and assume that $\CF$ induces an isomorphism on Hochschild cohomology. Assume further that $\BF$ is invariant under cyclic rotation of the first $p+1$ and last $q+1$ inputs, i.e.,  $\forall p,q\geq 0$, $\forall a_1,\dots, a_p,m,b_1, \dots, b_q,n\in A$:
\begin{align*}
\BF_{p,q}(\un{a_1},\dots, \un{a_p}; m;\un{b_1}, \dots, \un{b_q})(n)
&=(-1)^\ep\cdot
\BF_{q,p}(\un{b_1}, \dots, \un{b_q};n;\un{a_1},\dots, \un{a_p})(m)
\\
\begin{tikzpicture}[scale=1, baseline=-1ex]
\draw
    (0,0) 
 -- (0,.8)  -- (0,0)
 -- (-.866,.5)  -- (0,0)
 -- (-.5,.866)  -- (0,0)
 -- (0,1)  -- (0,0)
 -- (.5,.866)  -- (0,0) 
 -- (.866,.5)  -- (0,0) 
 -- (0,-.8)  -- (0,0)
 -- (-.866,-.5)  -- (0,0)
 -- (-.5,-.866)  -- (0,0)
 -- (0,-1)  -- (0,0)
 -- (.5,-.866)  -- (0,0) 
 -- (.866,-.5)  -- (0,0)  ;
 \draw [very thick] (-1,0) -- (1,0);
\filldraw[color=black, fill=white, very thick] (0,0) circle (.12);
\draw (1.2,-.6) node {$a_1$};
\draw (.7,-1.1) node {$a_2$};
\draw (-.4,-1.2) node {$\dots$};
\draw (-1.2,-.6) node {$a_p$};
\draw (-1.3,0) node {$m$};
\draw (-1.2,.6) node {$b_1$};
\draw (-.7,1.1) node {$b_2$};
\draw (.4,1.2) node {$\dots$};
\draw (1.2,.6) node {$b_q$};
\draw (1.3,0) node {$n$};
\end{tikzpicture}
&=(-1)^\ep\cdot
\begin{tikzpicture}[scale=1, baseline=-1ex]
\draw
    (0,0) 
 -- (0,.8)  -- (0,0)
 -- (-.866,.5)  -- (0,0)
 -- (-.5,.866)  -- (0,0)
 -- (0,1)  -- (0,0)
 -- (.5,.866)  -- (0,0) 
 -- (.866,.5)  -- (0,0) 
 -- (0,-.8)  -- (0,0)
 -- (-.866,-.5)  -- (0,0)
 -- (-.5,-.866)  -- (0,0)
 -- (0,-1)  -- (0,0)
 -- (.5,-.866)  -- (0,0) 
 -- (.866,-.5)  -- (0,0)  ;
 \draw [very thick] (-1,0) -- (1,0);
\filldraw[color=black, fill=white, very thick] (0,0) circle (.12);
\draw (1.2,-.6) node {$b_1$};
\draw (.7,-1.1) node {$b_2$};
\draw (-.4,-1.2) node {$\dots$};
\draw (-1.2,-.6) node {$b_q$};
\draw (-1.3,0) node {$n$};
\draw (-1.2,.6) node {$a_1$};
\draw (-.7,1.1) node {$a_2$};
\draw (.4,1.2) node {$\dots$};
\draw (1.2,.6) node {$a_p$};
\draw (1.3,0) node {$m$};
\end{tikzpicture}
\end{align*}
where $(-1)^\ep=(-1)^{(|\un{a_1}|+\dots+ |\un{a_p}|+|m|)\cdot (|\un{b_1}|+ \dots+ |\un{b_q}|+|n|)}$.
Then $\BF$ is a Poincar\'e duality structure. This was proved in \cite[lemma 17]{T2}.
\item\label{ITEM:non-PD-example}
In the examples in section \ref{SEC:compute-Hochschild-BV} of this paper we will check if a given homotopy inner product $\BF$ is a Poincar\'e duality structure or not by explicitly computing $\CF$, $\CB$ and $\CZ^\BF$; cf. the examples given in \ref{SUBSEC:Delta-F-strict}, \ref{SUBSEC:Delta-F-homotopies} and \ref{SUBSEC:PD-counterexample}.

In particular, it is worth noting that there exist homotopy inner products $\BF$ for which $\CF$ is an isomorphism, but for which $\CF^{-1}\circ \CB \circ \CF\neq \Delta^\BF$. We provide an explicit example for such a homotopy inner product in \ref{SUBSEC:PD-counterexample}.
\end{enumerate}
\end{exs}

\section{Computations of bimodule maps with higher homotopies}\label{SEC:compute-bimodule-maps}

In this section, we will give explicit bimodule maps and bimodule maps with higher homotopies for specific dg-algebras. In particular we will compute the homotopy inner product on $H^\bu(S^2;\Z_2)$ coming from a pullback of a local homotopy inner product. In the remainder of this paper --with the exception of observation \ref{OBS:not-bimodule}-- the ground ring will always be $R=\Z_2$.

\begin{ex}[$H^\bu(S^2;\Z_2)$ inner product without homotopies]\label{EX:HS2Z2-no-homotopies}
Consider the dg-algebra $A:=H^\bu(S^2;\Z_2)\cong \Z_2.e\oplus \Z_2.s$ with zero differential and degrees $|e|=0$ and $|s|=-2$, and where $e$ is the unit and $s\cdot s=0$. For the dg module and dual dg module of $A$ we use the notation $\B{A}\cong \Z_2.e\oplus \Z_2.s$ and $\B{A}^*\cong \Z_2.e^*\oplus \Z_2.s^*$, respectively, where $e^*$ and $s^*$ are the duals of $e$ and $s$ with $|e^*|=0$ and $|s^*|=2$ and the module structure is given by $e.e^*=e^*.e=e^*, e.s^*=s^*.e=s^*, s.e^*=e^*.s=0, s.s^*=s^*.s=e^*$ (see \ref{SUBSEC:basic-setup}).

Define the dg bimodule map $\BF:\B{A}\to \B{A}^*$ by $\BF(s)=e^*$ and $\BF(e)=s^*$,
\[
\begin{tikzcd}[column sep=small]
\B{A}\arrow{d}{\BF} &=\dots & \gen{s} \arrow{l}\arrow{drr}{\BF} & 0 \arrow{l} &\gen{e}\arrow{l}\arrow{l}\arrow{drr}{\BF} &0\arrow{l} &0\arrow{l} & \dots \arrow{l} \\
\B{A}^*&=\dots & 0\arrow{l} & 0\arrow{l} &\gen{e^*}\arrow{l} &0\arrow{l} &\gen{s^*}\arrow{l} & \dots \arrow{l}
\end{tikzcd}
\]
Thus, $\BF:\B{A}\to \B{A}^*$ is the map $\BF(s)(e)=1, \BF(s)(s)=0, \BF(e)(e)=0, \BF(e)(s)=1$ (in other words, $\BF$ is given by capping with $s^*$). One can check directly that $\BF$ is a dg bimodule map. Note, that when interpreting $\BF$ as an inner product $<,>=\BF:\B{A}\otimes\B{A}\to \Z_2$,  the only non-vanishing inner products are $<s,e>=1$ and $<e,s>=1$. 
\end{ex}

\begin{ex}[$H^\bu(S^2;\Z_2)$ inner product with homotopies]\label{EX:HS2Z2-with-homotopies}\label{EX:HS2Z2-with-homotopies} We next consider the same algebra $A$ and module structures $\B{A}$ and $\B{A}^*$ as in example \ref{EX:HS2Z2-no-homotopies}, but we define an inner product $\wt\BF$ for $\B{A}$ with higher homotopies (in the sense of \ref{SUBSEC:IP-and-HIP}). In fact, $\wt \BF$ has its only non-zero components given by $\wt \BF_{0,0}:\B{A}\to \B{A}^*$ and $\wt \BF_{2,0}:\un{A}\ot \un{A}\ot\B{A}\to \B{A}^*$ (see \ref{SUBSEC:IP-and-HIP}) via
\begin{align}\label{EQU:HIP-in-H(S2,Z2)}
&\wt \BF_{0,0}(s)(e)=1, \quad\quad\quad \wt \BF_{0,0}(e)(s)=1,\quad\quad\quad \wt \BF_{2,0}(\un{s},\un{s};e)(e)=1.
\\ & \nonumber
\begin{tikzpicture}[scale=.7, baseline=-1ex]
  \draw [very thick] (-1,0) -- (1,0);
\filldraw[color=black, fill=white, very thick] (0,0) circle (.12);
\draw (-1.3,0) node {$s$};
\draw (1.3,0) node {$e$};
\end{tikzpicture}
\hspace{16.5mm}
\begin{tikzpicture}[scale=.7, baseline=-1ex]
  \draw [very thick] (-1,0) -- (1,0);
\filldraw[color=black, fill=white, very thick] (0,0) circle (.12);
\draw (-1.3,0) node {$e$};
\draw (1.3,0) node {$s$};
\end{tikzpicture}
\hspace{17mm}
\begin{tikzpicture}[scale=.7, baseline=-1ex]
\draw
    (0,0) 
 -- (-.5,-.866)  -- (0,0)
 -- (.5,-.866)  -- (0,0) ;
  \draw [very thick] (-1,0) -- (1,0);
\filldraw[color=black, fill=white, very thick] (0,0) circle (.12);
\draw (.7,-1.1) node {$s$};
\draw (-.7,-1.1) node {$s$};
\draw (-1.3,0) node {$e$};
\draw (1.3,0) node {$e$};
\end{tikzpicture}
\end{align}
It is again a direct check that $\wt \BF$ is a homotopy inner product, i.e., it satisfies all equation required by $0=\delt \BF=\delt_1 \BF$ given by \eqref{EQU:de1(F)} (since $d$ and thus $\delt_0$ vanishes). Explicitly, these equations are:
\begin{align*}
\wt \BF_{0,0}(a\cdot a_1)(\wt a)=&\wt \BF_{0,0}(a)(a_1\cdot \wt a), \\
\wt \BF_{0,0}(a_1\cdot a)(\wt a)=&\wt \BF_{0,0}(a)(\wt a\cdot a_1), \\
\wt \BF_{2,0}(\un{a_1},\un{a_2};a\cdot a_3)(\wt a)=&\wt \BF_{2,0}(\un{a_1},\un{a_2};a)(a_3\cdot \wt a),\\
0=&\wt \BF_{2,0}(\un{a_1},\un{a_2};a_3\cdot a)(\wt a)+
\wt \BF_{2,0}(\un{a_1},\un{a_2\cdot a_3}; a)(\wt a)\\
&+\wt \BF_{2,0}(\un{a_1\cdot a_2},\un{a_3}; a)(\wt a)+
\wt \BF_{2,0}(\un{a_2},\un{a_3}; a)(\wt a\cdot a_1)
\end{align*}
for all $a,\wt a\in \B{A}$, and $a_1, a_2,a_3\in A$. 
\end{ex}

We next give formulas for calculating homotopy inner products (over $\Z_2$) for (triangulated) $2$-dimensional spaces on the cochain level. The following observation notes that higher homotopies naturally appear for inner products on the cochain level.
\begin{obs} \label{OBS:not-bimodule}
Let $A$ be a unital dg-algebra over any commutative ring $R$, so that both $\B{A}:=A$ and $\B{A}^*$ are dg-modules over $A$, (see \ref{SUBSEC:basic-setup}). Now, let $x\in \B{A}^*$ be a fixed closed element, $d(x)=0$. Define $\BF:\B{A}\to \B{A}^*$ for $a\in \B{A}$ by setting $\BF(a)\in \B{A}^*$ evaluated on some $\wt a\in \B{A}$ to be $\BF(a)(\wt a):=(x\frown a)(\wt a)=x(a\cdot \wt a)$.

{\bf Claim:} $\BF$ is chain map, and a graded right module map. $\BF$ is in general \emph{not} a graded left module map. A sufficient condition for $\BF$ being a graded left module map is that $A$ is graded commutative.
\begin{proof}
First, $\BF$ is chain map, since for $a,\wt a\in \B{A}$ and $dx=0$, we have $\BF(da)(\wt a)=x(da\cdot \wt a)=x(d(a\cdot \wt a)-(-1)^{|a|}a\cdot d\wt a)=(-1)^{|x|+1}dx(a\cdot \wt a)-(-1)^{|a|}\cdot \BF(a)(d\wt a)=-(-1)^{|a|}\cdot (-1)^{|\BF(a)|+1}\cdot d(\BF(a))(\wt a)=(-1)^{|\BF|}\cdot d(\BF(a))(\wt a)$. Next, $\BF$ is a graded right module map, since for $a,\wt a\in \B{A}$, $a_1\in A$, we  have $(\BF(a). a_1)(\wt a)=\BF(a)(a_1\cdot \wt a)=x(a\cdot a_1\cdot \wt a)=\BF(a.a_1)(\wt a)$. To check when $\BF$ is a graded left module map, compute $\BF(a_1. a)(\wt a)=x(a_1\cdot a\cdot \wt a)$ and $(a_1. \BF(a))(\wt a)=(-1)^{|a_1|\cdot (|\BF(a)|+|\wt a|)}\cdot \BF(a)(\wt a\cdot a_1)=(-1)^{|a_1|\cdot (|\BF|+|a|+|\wt a|)}\cdot x(a\cdot \wt a\cdot a_1)$. Thus, $\BF$ is in general  \emph{not} a graded left module map. Moreover, if $a_1\cdot a\cdot \wt a=(-1)^{|a_1|\cdot (|a|+|\wt a|)}a\cdot \wt a\cdot a_1$ for all $a,\wt a,a_1$, then $(a_1. \BF(a))(\wt a)=(-1)^{|a_1|\cdot |\BF|}\cdot\BF(a_1.a)(\wt a)$, and thus $\BF$ will be a graded left module map.
\end{proof}

In particular, assume that $X$ is a closed, oriented manifold, and $R$ is a commutative ring. In order to calculate the BV algebra on $HH^\bu(C^\bu( X;R); (C^\bu( X;R))^*)$ one might try to use capping a cochain with a fundamental cycle $x$ of $X$ as the appropriate dg-bimodule map $\BF=x\frown -:C^\bu( X;R)\to C_\bu( X;R)\stackrel {incl}\hookrightarrow (C^\bu( X;R))^*$. However, the above observation shows that capping on the (co-)chain level is in general \emph{not} a dg-bimodule map, and thus does not induce a chain map on the Hochschild cochains. One way to resolve this issue is to provide higher homotopies for the left module structure, i.e., to provide a homotopy inner product, which then does give a corresponding chain map on Hochschild cochains. This is what is done in this paper for $S^2$ with $\Z_2$ coefficients.
\end{obs}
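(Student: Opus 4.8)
The plan is to verify the three assertions by direct computation with the dual differential and the dual bimodule structure on $\B{A}^*$ recorded in \ref{SUBSEC:basic-setup}, the only subtlety being the Koszul signs. As a preliminary I would note that $\BF$ has degree $|\BF| = |x|$: since $\BF(a)(\wt a) = x(a\cdot\wt a)$ can be nonzero only when $|a| + |\wt a| = -|x|$, the functional $\BF(a)$ sits in degree $|x| + |a|$, so $|\BF(a)| = |x| + |a|$ and in particular $|a| + |\BF(a)| \equiv |x| \pmod 2$.

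For the chain-map property I would begin with $\BF(da)(\wt a) = x\big((da)\cdot\wt a\big)$ and apply the graded Leibniz rule to rewrite $(da)\cdot\wt a = d(a\cdot\wt a) - (-1)^{|a|}\, a\cdot(d\wt a)$. The term $x\big(d(a\cdot\wt a)\big)$ vanishes because $dx = 0$, via $d(x)(m) = (-1)^{|x|+1} x(dm)$; the surviving term $-(-1)^{|a|} x\big(a\cdot(d\wt a)\big) = -(-1)^{|a|}\BF(a)(d\wt a)$ becomes, after using the dual differential identity $d(\BF(a))(\wt a) = (-1)^{|\BF(a)|+1}\BF(a)(d\wt a)$ and the parity $|a| + |\BF(a)| \equiv |\BF|$, exactly $(-1)^{|\BF|} d(\BF(a))(\wt a)$. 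This is the chain-map relation $\BF \circ d = (-1)^{|\BF|}\, d\circ \BF$.

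For the module properties I would compute both sides directly. On the right, $(\BF(a).a_1)(\wt a) = \BF(a)(a_1\cdot\wt a) = x(a\cdot a_1\cdot\wt a)$ by the right action $(n.a_1)(m) = n(a_1.m)$, while $\BF(a.a_1)(\wt a) = x\big((a\cdot a_1)\cdot\wt a\big)$; these coincide by associativity, so $\BF(a.a_1) = \BF(a).a_1$. On the left, $\BF(a_1.a)(\wt a) = x(a_1\cdot a\cdot\wt a)$, whereas, using $(a_1.n)(m) = (-1)^{|a_1|\cdot(|n|+|m|)} n(m.a_1)$, one gets $(a_1.\BF(a))(\wt a) = (-1)^{|a_1|\cdot(|\BF|+|a|+|\wt a|)} x(a\cdot\wt a\cdot a_1)$. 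Comparing these — with the overall Koszul sign $(-1)^{|a_1|\cdot|\BF|}$ required of a graded left module map of degree $|\BF|$ when $a_1$ commutes past $\BF$ — I find that the left module axiom is equivalent to the identity $x(a_1\cdot a\cdot\wt a) = (-1)^{|a_1|\cdot(|a|+|\wt a|)} x(a\cdot\wt a\cdot a_1)$ holding for all $a_1, a, \wt a$, i.e.\ to $x$ being invariant under cyclically moving the leading factor of a triple product to the end.

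Since this cyclicity identity is not forced by the dg-algebra axioms, $\BF$ is in general not a left module map: any non-graded-commutative $A$ equipped with a suitable $x$, such as the singular cochain algebra $C^\bu(X;R)$ (with $x$ a fundamental cocycle) that motivates the rest of the paper, provides a counterexample. On the other hand, when $A$ is graded commutative one has $a_1\cdot(a\cdot\wt a) = (-1)^{|a_1|\cdot(|a|+|\wt a|)} (a\cdot\wt a)\cdot a_1$, which makes the displayed identity automatic and hence $\BF$ a genuine dg-bimodule map. There is no genuine obstacle here; the step most demanding of care is the left-module computation, where the Koszul signs in the dual left action and the dual differential must be tracked precisely so that the parities appearing in the chain-map and left-module identities come out as claimed.
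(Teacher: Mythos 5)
Your proposal is correct and follows essentially the same route as the paper: the same degree bookkeeping ($|\BF|=|x|$, $|\BF(a)|=|x|+|a|$), the same Leibniz-rule computation for the chain-map property, and the same direct comparison of $x(a_1\cdot a\cdot\wt a)$ with $(-1)^{|a_1|(|\BF|+|a|+|\wt a|)}x(a\cdot\wt a\cdot a_1)$ reducing the left-module axiom to the cyclic identity implied by graded commutativity. (Only a cosmetic slip: the element $x$ in the motivating example is a fundamental \emph{cycle} viewed in $(C^\bu(X;R))^*$, not a cocycle.)
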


\begin{ex}[The $0$-simplex]\label{EX:F-for-the-0-simplex}
Let $A^{[0]}:=\Z_2.e_0$, where $e_0$ is the unit. A homotopy inner product $\BF^{[0]}:\B{A}^{[0]}\to (\B{A}^{[0]})^*$ is given by $\BF^{[0]}(e_0)(e_0)=1$.

We will also vary the superscript in an obvious way, i.e., the dg-algebra $A^{[1]}:=\Z_2.e_1$ has the homotopy inner product $\BF^{[1]}:\B{A}^{[1]}\to (\B{A}^{[1]})^*, \BF^{[1]}(e_1)(e_1)=1$, etc.
\end{ex}

\begin{ex}[The $1$-simplex]\label{EX:F-for-the-1-simplex}
Let $A^{[01]}:=\Z_2.e_0\oplus \Z_2.e_1\oplus \Z_2.b_{01}$, where $|e_0|=|e_1|=0$ and $|b_{01}|=-1$ with differential $d(e_0)=d(e_1)=b_{01}$. The product is the usual cup product of simplicial cochains, i.e., $e_0\cdot e_0=e_0$, $e_1\cdot e_1=e_1$, and $e_0\cdot b_{01}=b_{01}\cdot e_1=b_{01}$. Note that the unit of $A^{[01]}$ is $e_0+e_1$.

We want to define an inner product $\BF$ with lowest component non-vanishing only for $\BF(e_0)(b_{01})=\BF(b_{01})(e_1)=1$. Note that $\BF$ is not a chain map, but $\BF(d(a))(\wt a)+\BF(a)(d(\wt a))=\BF^{[0]}(a)(\wt a)+\BF^{[1]}(a)(\wt a)$ for all $a,\wt a \in A^{[01]}$. Moreover, $\BF$ is a graded right module map, i.e., $\BF(a\cdot a_1)(\wt a)=\BF(a)(a_1\cdot \wt a)$ for all $a, \wt a, a_1\in A^{[01]}$, but $\BF$ is not a graded left module map, since $\BF(e_0\cdot e_0)(b_{01})=1\neq 0 =\BF(e_0)(b_{01}\cdot e_0)$.

There is an inductive procedure (involving choices at each stage) for obtaining higher homotopies that provide $\BF$ with left modules homotopies, making it into a homotopy inner product up to $\BF^{[0]}$ and $\BF^{[1]}$ from example \ref{EX:F-for-the-0-simplex} (interpreted as maps $\B{A}^{[01]}\to (\B{A}^{[01]})^*$) . This procedure was described in \cite[Proposition 3.1.2]{TZS}. Performing the induction leads to a sequence of maps, which was first stated in \cite[Proposition B.2]{TZS}:
\begin{align}\label{EQU:Fk0-for-1-simplicies}
\forall k\geq 0: &
\quad\quad \BF^{[01]}_{k,0}(\un{b_{01}},\dots,\un{b_{01}};e_0)(b_{01})=1,
\quad\quad \BF^{[01]}_{k,0}(\un{b_{01}},\dots,\un{b_{01}};b_{01})(e_1)=1,\\
\nonumber &
\hspace{13mm}
\begin{tikzpicture}[scale=.7, baseline=-1ex]
\draw
    (0,0) 
 -- (-.866,-.5)  -- (0,0)
 -- (-.5,-.866)  -- (0,0)
 -- (0,-1)  -- (0,0)
 -- (.5,-.866)  -- (0,0) 
 -- (.866,-.5)  -- (0,0) ;
 \draw [very thick] (-1,0) -- (1,0);
\filldraw[color=black, fill=white, very thick] (0,0) circle (.12);
\draw (1.5,-.6) node {$b_{01}$};
\draw (.8,-1.2) node {$b_{01}$};
\draw (-.4,-1.2) node {$\dots$};
\draw (-1.3,-.6) node {$b_{01}$};
\draw (-1.4,0) node {$e_0$};
\draw (1.5,0) node {$b_{01}$};
\end{tikzpicture}
\hspace{31mm}
\begin{tikzpicture}[scale=.7, baseline=-1ex]
\draw
    (0,0) 
 -- (-.866,-.5)  -- (0,0)
 -- (-.5,-.866)  -- (0,0)
 -- (0,-1)  -- (0,0)
 -- (.5,-.866)  -- (0,0) 
 -- (.866,-.5)  -- (0,0) ;
 \draw [very thick] (-1,0) -- (1,0);
\filldraw[color=black, fill=white, very thick] (0,0) circle (.12);
\draw (1.5,-.6) node {$b_{01}$};
\draw (.8,-1.2) node {$b_{01}$};
\draw (-.4,-1.2) node {$\dots$};
\draw (-1.3,-.6) node {$b_{01}$};
\draw (-1.4,0) node {$b_{01}$};
\draw (1.5,0) node {$e_1$};
\end{tikzpicture}
\end{align}
All other inner products are zero. Note, this resolves the above problem of $\BF^{[01]}_{0,0}$ not being a graded left module map, since, now, $\BF^{[01]}_{0,0}$ is a graded left module map up to the homotopy $\BF^{[01]}_{1,0}$; for example:
\begin{multline*}
(\delt\BF^{[01]})_{1,0}(\un{e_0};e_0)(b_{01})
=\BF^{[01]}_{1,0}(\un{d(e_0)};e_0)(b_{01})+\BF^{[01]}_{1,0}(\un{e_0};d(e_0))(b_{01})
\\
+\BF^{[01]}_{0,0}(e_0\cdot e_0)(b_{01})+\BF^{[01]}_{0,0}(e_0)(b_{01}\cdot e_0)
=1+0+1+0=0.
\end{multline*}

From \cite[Proposition B.2]{TZS}, which we will also prove in appendix \ref{APP:Proof-of-HIP-on-2-simplex}, we have that:
\begin{equation}\label{EQU:D(F01)=F0+F1}
\delt \BF^{[01]}=\BF^{[0]}+\BF^{[1]}
\end{equation}

Again, we will need to vary the superscript in an obvious way, i.e., for the dg-algebra $A^{[12]}$ there are maps $\BF^{[12]}$ which are given by replacing $0$ and $1$ in the above with $1$ and $2$, respectively, etc.
\end{ex}

\begin{ex}[The $2$-simplex]\label{EX:F-for-the-2-simplex}
We now describe inner product maps for the $2$-simplex, which extends the previous two examples of the $0$- and $1$-simplex. Let $A^{[012]}:=\Z_2.e_0\oplus \Z_2.e_1\oplus \Z_2.e_2\oplus \Z_2.b_{01}\oplus \Z_2.b_{02}\oplus \Z_2.b_{12}\oplus \Z_2.c_{012}$ with $|e_j|=0$, $|b_{ij}|=-1$ and $|c_{012}|=-2$, and differential $d(e_0)=b_{01}+b_{02}$, $d(e_1)=b_{01}+b_{12}$, $d(e_2)=b_{02}+b_{12}$ and $d(b_{ij})=c_{012}$ for all $0\leq i<j\leq 2$. The multiplication is non-zero only for
\begin{align}\label{EQU:Product-of-2-simplex}
&\forall j:\quad
e_j\cdot e_j=e_j, \quad
\forall i<j:\quad
e_i\cdot b_{ij}=b_{ij}, \quad
b_{ij}\cdot e_j=b_{ij}, \\ \nonumber
&\quad\quad e_0\cdot c_{012}=c_{012}, \quad
c_{012}\cdot e_2=c_{012}, \quad
b_{01}\cdot b_{12}=c_{012}.
\end{align}

Now, following the procedure (which uses locality) from \cite[Proposition 3.1.2]{TZS}, we define maps $\BF^{[012]}_{k,0}$  whose only non-zero maps are given by the following equations \eqref{EQU:Fk0-02-2-012}-\eqref{EQU:F-c-c}:
\begin{align}\label{EQU:Fk0-02-2-012}
\forall k\geq 0:\quad&
\BF^{[012]}_{k,0}(\un{b_{02}},\dots,\un{b_{02}};c_{012})(e_2)=1, 
\\
&\BF^{[012]}_{k,0}(\un{b_{02}},\dots,\un{b_{02}};e_{0})(c_{012})=1,
\\
&\BF^{[012]}_{k,0}(\un{b_{02}},\dots,\un{b_{02}};b_{01})(b_{12})=1,
\\
\nonumber
\begin{tikzpicture}[scale=.8, baseline=-1ex]
\draw
    (0,0) 
 -- (0,-.8)  -- (0,0)
 -- (-.866,-.5)  -- (0,0)
 -- (-.5,-.866)  -- (0,0)
 -- (0,-1)  -- (0,0)
 -- (.5,-.866)  -- (0,0) 
 -- (.866,-.5)  -- (0,0) ;
 \draw [very thick] (-1,0) -- (1,0);
\filldraw[color=black, fill=white, very thick] (0,0) circle (.12);
\draw (1.5,-.6) node {$b_{02}$};
\draw (.8,-1.2) node {$b_{02}$};
\draw (-.2,-1.2) node {$\dots$};
\draw (-1.3,-.6) node {$b_{02}$};
\draw (1.4,0) node {$e_2$};
\draw (-1.5,0) node {$c_{012}$};
\end{tikzpicture}
& \hspace{1cm}
\begin{tikzpicture}[scale=.8, baseline=-1ex]
\draw
    (0,0) 
 -- (0,-.8)  -- (0,0)
 -- (-.866,-.5)  -- (0,0)
 -- (-.5,-.866)  -- (0,0)
 -- (0,-1)  -- (0,0)
 -- (.5,-.866)  -- (0,0) 
 -- (.866,-.5)  -- (0,0) ;
 \draw [very thick] (-1,0) -- (1,0);
\filldraw[color=black, fill=white, very thick] (0,0) circle (.12);
\draw (1.5,-.6) node {$b_{02}$};
\draw (.8,-1.2) node {$b_{02}$};
\draw (-.2,-1.2) node {$\dots$};
\draw (-1.3,-.6) node {$b_{02}$};
\draw (1.5,0) node {$c_{012}$};
\draw (-1.4,0) node {$e_{0}$};
\end{tikzpicture}
\hspace{1cm}
\begin{tikzpicture}[scale=.8, baseline=-1ex]
\draw
    (0,0) 
 -- (0,-.8)  -- (0,0)
 -- (-.866,-.5)  -- (0,0)
 -- (-.5,-.866)  -- (0,0)
 -- (0,-1)  -- (0,0)
 -- (.5,-.866)  -- (0,0) 
 -- (.866,-.5)  -- (0,0) ;
 \draw [very thick] (-1,0) -- (1,0);
\filldraw[color=black, fill=white, very thick] (0,0) circle (.12);
\draw (1.5,-.6) node {$b_{02}$};
\draw (.8,-1.2) node {$b_{02}$};
\draw (-.2,-1.2) node {$\dots$};
\draw (-1.3,-.6) node {$b_{02}$};
\draw (1.45,0) node {$b_{12}$};
\draw (-1.45,0) node {$b_{01}$};
\end{tikzpicture}
\end{align}
\begin{align}
\forall k\geq 0:\forall 1\leq \ell\leq k+1:\quad
&\BF^{[012]}_{k+1,0}(\un{b_{01}},\dots,\un{b_{01}},\underbrace{\un{c_{012}}}_{\ell\text{th}},\un{b_{02}},\dots,\un{b_{02}};e_0)(b_{01})=1,
\\
&\BF^{[012]}_{k+1,0}(\un{b_{01}},\dots,\un{b_{01}},\underbrace{\un{c_{012}}}_{\ell\text{th}},\un{b_{02}},\dots,\un{b_{02}};b_{01})(e_{1})=1,
\\
&\BF^{[012]}_{k+1,0}(\un{b_{02}},\dots,\un{b_{02}},\underbrace{\un{c_{012}}}_{\ell\text{th}},\un{b_{12}},\dots,\un{b_{12}};e_1)(b_{12})=1,
\\
&\BF^{[012]}_{k+1,0}(\un{b_{02}},\dots,\un{b_{02}},\underbrace{\un{c_{012}}}_{\ell\text{th}},\un{b_{12}},\dots,\un{b_{12}};b_{12})(e_{2})=1,
\\
\nonumber
\begin{tikzpicture}[scale=.8, baseline=-1ex]
\draw
    (0,0) 
 -- (0,-.8)  -- (0,0)
 -- (-.866,-.5)  -- (0,0)
 -- (-.5,-.866)  -- (0,0)
 -- (0,-1)  -- (0,0)
 -- (.5,-.866)  -- (0,0) 
 -- (.866,-.5)  -- (0,0) ;
 \draw [very thick] (-1,0) -- (1,0);
\filldraw[color=black, fill=white, very thick] (0,0) circle (.12);
\draw (1.6,-.6) node {$b_{01}$};
\draw (1.4,-1.2) node {$\iddots$};
\draw (1,-1.8) node {$b_{01}$};
\draw (0,-1.8) node {$c_{012}$};
\draw (-1.6,-.6) node {$b_{02}$};
\draw (-1.4,-1.2) node {$\ddots$};
\draw (-1,-1.8) node {$b_{02}$};
\draw (1.6,0) node {$b_{01}$};
\draw (-1.6,0) node {$e_{0}$};
\end{tikzpicture}
\hspace{1cm}
\begin{tikzpicture}[scale=.8, baseline=-1ex]
\draw
    (0,0) 
 -- (0,-.8)  -- (0,0)
 -- (-.866,-.5)  -- (0,0)
 -- (-.5,-.866)  -- (0,0)
 -- (0,-1)  -- (0,0)
 -- (.5,-.866)  -- (0,0) 
 -- (.866,-.5)  -- (0,0) ;
 \draw [very thick] (-1,0) -- (1,0);
\filldraw[color=black, fill=white, very thick] (0,0) circle (.12);
\draw (1.6,-.6) node {$b_{01}$};
\draw (1.4,-1.2) node {$\iddots$};
\draw (1,-1.8) node {$b_{01}$};
\draw (0,-1.8) node {$c_{012}$};
\draw (-1.6,-.6) node {$b_{02}$};
\draw (-1.4,-1.2) node {$\ddots$};
\draw (-1,-1.8) node {$b_{02}$};
\draw (1.6,0) node {$e_1$};
\draw (-1.6,0) node {$b_{01}$};
\end{tikzpicture}
& \hspace{1cm}
\begin{tikzpicture}[scale=.8, baseline=-1ex]
\draw
    (0,0) 
 -- (0,-.8)  -- (0,0)
 -- (-.866,-.5)  -- (0,0)
 -- (-.5,-.866)  -- (0,0)
 -- (0,-1)  -- (0,0)
 -- (.5,-.866)  -- (0,0) 
 -- (.866,-.5)  -- (0,0) ;
 \draw [very thick] (-1,0) -- (1,0);
\filldraw[color=black, fill=white, very thick] (0,0) circle (.12);
\draw (1.6,-.6) node {$b_{02}$};
\draw (1.4,-1.2) node {$\iddots$};
\draw (1,-1.8) node {$b_{02}$};
\draw (0,-1.8) node {$c_{012}$};
\draw (-1.6,-.6) node {$b_{12}$};
\draw (-1.4,-1.2) node {$\ddots$};
\draw (-1,-1.8) node {$b_{12}$};
\draw (1.6,0) node {$b_{12}$};
\draw (-1.6,0) node {$e_1$};
\end{tikzpicture}
\hspace{1cm}
\begin{tikzpicture}[scale=.8, baseline=-1ex]
\draw
    (0,0) 
 -- (0,-.8)  -- (0,0)
 -- (-.866,-.5)  -- (0,0)
 -- (-.5,-.866)  -- (0,0)
 -- (0,-1)  -- (0,0)
 -- (.5,-.866)  -- (0,0) 
 -- (.866,-.5)  -- (0,0) ;
 \draw [very thick] (-1,0) -- (1,0);
\filldraw[color=black, fill=white, very thick] (0,0) circle (.12);
\draw (1.6,-.6) node {$b_{02}$};
\draw (1.4,-1.2) node {$\iddots$};
\draw (1,-1.8) node {$b_{02}$};
\draw (0,-1.8) node {$c_{012}$};
\draw (-1.6,-.6) node {$b_{12}$};
\draw (-1.4,-1.2) node {$\ddots$};
\draw (-1,-1.8) node {$b_{12}$};
\draw (1.6,0) node {$e_2$};
\draw (-1.6,0) node {$b_{12}$};
\end{tikzpicture}
\end{align}
\begin{multline}\label{EQU:F-c-c}
\forall k\geq 0:\forall 1\leq \ell_1<\ell_2\leq k+2:
\\
\BF^{[012]}_{k+2,0}(\un{b_{01}},\dots,\un{b_{01}},\underbrace{\un{c_{012}}}_{\ell_1\text{th}},\un{b_{02}},\dots,\un{b_{02}},\underbrace{\un{c_{012}}}_{\ell_2\text{th}},\un{b_{12}},\dots,\un{b_{12}};e_{1})(e_1)=1.
\end{multline}
\[
\begin{tikzpicture}[scale=.7, baseline=-1ex]
\draw
    (0,0) 
 -- (0.518,-1.932)  -- (0,0)
 -- (1.932,-.518)  -- (0,0)
 -- (1,-1.732)  -- (0,0)
 -- (1.732,-1)  -- (0,0)
 -- (1.414,-1.414)  -- (0,0)
 -- (0,-2)  -- (0,0)
  -- (-0.518,-1.932)  -- (0,0)
 -- (-1.932,-.518)  -- (0,0)
 -- (-1,-1.732)  -- (0,0)
 -- (-1.732,-1)  -- (0,0)
 -- (-1.414,-1.414)  -- (0,0);
 \draw [very thick] (-2,0) -- (2,0);
\filldraw[color=black, fill=white, very thick] (0,0) circle (.12);
\draw (2.7,-.7) node {$b_{01}$};
\draw (2.8,-1.3) node {$\vdots$};
\draw (2.4,-1.8) node {$b_{01}$};
\draw (2.3,-2.6) node {$c_{012}$};
\draw (.9,-2.8) node {$b_{02}$};
\draw (0,-2.8) node {$\dots$};
\draw (-.9,-2.8) node {$b_{02}$};
\draw (-2.3,-2.6) node {$c_{012}$};
\draw (-2.7,-.7) node {$b_{12}$};
\draw (-2.7,-1.3) node {$\vdots$};
\draw (-2.4,-1.8) node {$b_{12}$};
\draw (-2.7,0) node {$e_1$};
\draw (2.7,0) node {$e_{1}$};
\end{tikzpicture}
\]

We claim that the following equation holds, which will be proved in appendix \ref{APP:Proof-of-HIP-on-2-simplex}:
\begin{equation}\label{EQU:D(F012)=F01+F02+F12}
\delt\BF^{[012]}=\BF^{[01]}+\BF^{[02]}+\BF^{[12]}
\end{equation}

As before, the above will also be applied to obvious variations of the superscript, such as, e.g,  $A^{[123]}$ with maps $\BF^{[123]}$ given by replacing $0$, $1$ and $2$ in the above with $1$, $2$ and $3$, respectively, etc.
\end{ex}

\begin{ex}[$C^\bu(S^2;\Z_2)$ inner product with homotopies]\label{EX:local-CS2Z2}
We now use a tetrahedral triangulation of the $2$-sphere. More precisely, we set
\[
A=\bigoplus_{0\leq j\leq 3}\Z_2.e_j\oplus\bigoplus_{0\leq i<j\leq 3}\Z_2.b_{ij}\oplus\bigoplus_{0\leq i<j<l\leq 3}\Z_2.c_{ijl}
\hspace{1cm}
\begin{tikzpicture}[scale=.8, baseline=4ex]
\draw
    (0,0)  -- (2,2)  -- (2.4,-.4) -- (0,0)  -- (4,1) -- (2.4,-.4)  -- (2,2) -- (4,1);
\draw (-.4,0) node {$e_0$};
\draw (2.3,2.2) node {$e_1$};
\draw (2.8,-.5) node {$e_2$};
\draw (4.3,1) node {$e_3$};
\end{tikzpicture}
\]
with $|e_j|=0$, $|b_{ij}|=-1$ and $|c_{ijl}|=-2$ and differential $d(e_j)=\sum_{i\neq j} b_{\{ij\}}$, $d(b_{ij})=\sum_{l\neq i,j} c_{\{ijl\}}$, and $d(c_{ijl})=0$, where the bracket ``$\{\}$'' denotes indices in ascending order, i.e., $b_{\{ij\}}=\left\{\begin{matrix}b_{ij}\text{, for } i<j\\b_{ji}\text{, for } j<i\end{matrix}\right.$ and similarly for $c_{\{ijl\}}$. The multiplication is similar to \eqref{EQU:Product-of-2-simplex}, where the indices ``$012$'' are replaced by any $0\leq i<j<l\leq 3$:
\begin{align}\label{EQU:Product-of-CS2Z2}
&\forall j:\quad
e_j\cdot e_j=e_j, \quad
\forall i<j:\quad
e_i\cdot b_{ij}=b_{ij}, \quad
b_{ij}\cdot e_j=b_{ij}, \\ \nonumber
&\forall i<j<l:\quad
 e_i\cdot c_{ijl}=c_{ijl}, \quad
c_{ijl}\cdot e_l=c_{ijl}, \quad
b_{ij}\cdot b_{jl}=c_{ijl},
\end{align}
and all other multiplications vanish. Note in particular that $A$ has the unit $e_0+e_1+e_2+e_3$.

We define the homotopy inner product $\BF$ for $\B{A}:=A$ to be given by \eqref{EQU:Fk0-02-2-012}-\eqref{EQU:F-c-c} with ``$012$'' in \eqref{EQU:Fk0-02-2-012}-\eqref{EQU:F-c-c} replaced by any $0\leq i<j<l\leq 3$, i.e.,
\begin{equation}\label{EQU:F-on-CS2Z2}
\BF:=\BF^{[012]}+\BF^{[013]}+\BF^{[023]}+\BF^{[123]}.
\end{equation}

{\bf Claim:} $\BF$ is a homotopy inner product, i.e., $\delt \BF=0$.
\begin{proof}
We compute $\delt \BF=\delt \BF^{[012]}+\delt\BF^{[013]}+\delt\BF^{[023]}+\delt\BF^{[123]}$. We claim that we can use equation \eqref{EQU:D(F012)=F01+F02+F12} to evaluate this expression, which is not completely obvious since $\delt$ in  \eqref{EQU:D(F012)=F01+F02+F12} is for the dg-algebra $A^{[012]}$, whereas we need to apply $\delt$ for $A$.

To see that \eqref{EQU:D(F012)=F01+F02+F12} holds for the dg-algebra $A$, note that, for example, $\delt\BF^{[012]}(\un{a_1},\dots)(a_r)$ applies a differential or a multiplication to the inputs $a_j\in A$ (as stated in  \eqref{EQU:de0(F)} and \eqref{EQU:de1(F)}). Now, if any of the inputs $a_j$ are generators of $A$ which has a $3$ in the index (such as, e.g., $b_{13}\in A$), then taking the differential or a multiplication will consist of sums of generators with $3$ in their indices, and thus $\delt\BF^{[012]}$ applied to it would vanish. On the other hand, if all inputs $a_j$ applied to $\delt\BF^{[012]}$ are only given by generators indexed by $0$, $1$, and $2$, then $\delt\BF^{[012]}$ applied to it coincides with what we would get for $A^{[012]}$, because, the multiplication in $A$ equals the one in $A^{[012]}$, and the differential in $A$ equals the differential in $A^{[012]}$ up to generators with indices of $3$. (For example, applying $d(b_{12})=c_{012} +c_{123}$ in $\delt\BF^{[012]}$ will vanish on $c_{123}$ and will coincide with \eqref{EQU:D(F012)=F01+F02+F12} on $c_{012}=d_{A^{[012]}}(b_{12})=$(differential of $b_{12}$ in $A^{[012]}$).)

Thus, $\delt \BF=\delt \BF^{[012]}+\delt\BF^{[013]}+\delt\BF^{[023]}+\delt\BF^{[123]}=(\BF^{[01]}+\BF^{[02]}+\BF^{[12]})+(\BF^{[01]}+\BF^{[03]}+\BF^{[13]})+(\BF^{[02]}+\BF^{[03]}+\BF^{[23]})+(\BF^{[12]}+\BF^{[13]}+\BF^{[23]})=0$.
\end{proof}
\end{ex}

\begin{prop}\label{PROP:pullback-of-local}
Let $A$ be the dg-algebra from example \ref{EX:local-CS2Z2}, and let $B=H^\bu(S^2;\Z_2)\cong \Z_2.e\oplus \Z_2.s$ be the dg-algebra from examples \ref{EX:HS2Z2-no-homotopies} and \ref{EX:HS2Z2-with-homotopies}. Then, the map $f:B\to A$, $f(e):=e_0+e_1+e_2+e_3$ and $f(s):=c_{012}$, is a dg-algebra quasi-isomorphism.

Transferring the homotopy inner product $\BF\ov{A}$ defined in \eqref{EQU:F-on-CS2Z2} via the map $f$ gives $f(\BF)\ov{B}=\wt \BF\ov{B}$, where $\wt \BF\ov{B}$ is the homotopy inner product defined in equation \eqref{EQU:HIP-in-H(S2,Z2)} from example \ref{EX:HS2Z2-with-homotopies}.
\end{prop}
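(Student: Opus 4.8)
The plan is to verify two things in turn: that $f:B\to A$ is a dg-algebra quasi-isomorphism, and that the transferred homotopy inner product $f(\BF)\ov{B}$ coincides term by term with $\wt\BF\ov{B}$ from \eqref{EQU:HIP-in-H(S2,Z2)}.

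For the first point, one checks routinely that $f$ is a dg-algebra map: it has degree $0$ (since $|e_j|=0=|e|$ and $|c_{012}|=-2=|s|$); it is a chain map because $d_A(f(e))=\sum_{0\le j\le 3}\sum_{i\ne j}b_{\{ij\}}=0$, each $b_{ij}$ occurring exactly twice so the sum vanishes mod $2$, and $d_A(f(s))=d_A(c_{012})=0$; and it is multiplicative because $f(e)=e_0+e_1+e_2+e_3$ is the unit of $A$ while $f(s\cdot s)=f(0)=0=c_{012}\cdot c_{012}=f(s)\cdot f(s)$. That $f$ is a quasi-isomorphism I would get by computing $H_\bu(A)$ directly --- or by noting that $A$ is the simplicial cochain algebra of $\partial\Delta^3\cong S^2$. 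Directly: $\ker(d_A:A_0\to A_{-1})=\langle e_0+e_1+e_2+e_3\rangle$, so $H_0(A)\cong\Z_2$; in degree $-1$, both $\mathrm{im}(d_A|_{A_0})$ and $\ker(d_A|_{A_{-1}})$ have dimension $3$ --- the latter because $d_A$ on the six $b$'s is the mod $2$ boundary operator of the complete graph $K_4$ on the four vertices $c_{012},c_{013},c_{023},c_{123}$, which has rank $3$ --- so $H_{-1}(A)=0$; and $H_{-2}(A)=\Z_2^4/\mathrm{im}(d_A|_{A_{-1}})\cong\Z_2$. Thus $H_\bu(A)\cong H^\bu(S^2;\Z_2)$, and $f$ carries $e$ to the unit, which generates $H_0(A)$, and $s$ to $c_{012}$, whose class in $H_{-2}(A)$ is nonzero since every element of $\mathrm{im}(d_A|_{A_{-1}})$ is a sum of an \emph{even} number of $c$-generators (each $d_A(b_{ij})$ is a sum of two) whereas $c_{012}$ is a sum of one. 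Hence $f$ is a quasi-isomorphism.

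For the second point, I would apply formula \eqref{EQU:f(F)/B-as-F/A}, which expresses $(f(\BF)\ov{B})_{p,q}$ evaluated on arguments $b_i,c_j,m,n\in B=\Z_2.e\oplus\Z_2.s$ as $(\BF\ov{A})_{p,q}$ evaluated on the $f$-images of these arguments. Both sides are multilinear and vanish when any algebra input is the unit, so it suffices to take all algebra inputs equal to $s$, whose $f$-image is $c_{012}$, and module inputs $m,n\in\{e,s\}$, so $f(m),f(n)\in\{e_0+e_1+e_2+e_3,\ c_{012}\}$. Writing $\BF\ov{A}=\BF^{[012]}+\BF^{[013]}+\BF^{[023]}+\BF^{[123]}$, two observations do almost all the work: every nonzero component of each $\BF^{[ijl]}$ has second index $0$, so $(f(\BF)\ov{B})_{p,q}=0$ whenever $q>0$; and the generator $c_{012}$ occupies an algebra-input slot of none of $\BF^{[013]},\BF^{[023]},\BF^{[123]}$ and of $\BF^{[012]}$ with multiplicity at most $2$ (from \eqref{EQU:Fk0-02-2-012}--\eqref{EQU:F-c-c}). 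So only $\BF^{[012]}$ contributes, and only for $p\in\{0,1,2\}$. Running the short case check: for $p=0$ the values $\BF^{[012]}_{0,0}(c_{012})(e_2)=\BF^{[012]}_{0,0}(e_0)(c_{012})=1$ give $(f(\BF)\ov{B})_{0,0}(s)(e)=(f(\BF)\ov{B})_{0,0}(e)(s)=1$; for $p=1$ nothing contributes, since each nonzero $\BF^{[012]}_{1,0}$ with algebra input $c_{012}$ pairs a module slot at $e_0$ or $e_1$ with one at $b_{01}$ or $b_{12}$, and $f(m),f(n)$ carry no $b$-component; for $p=2$, only $\BF^{[012]}_{2,0}(\un{c_{012}},\un{c_{012}};e_1)(e_1)=1$ (from \eqref{EQU:F-c-c}) contributes, giving $(f(\BF)\ov{B})_{2,0}(\un{s},\un{s};e)(e)=1$; and for $p\ge3$ nothing contributes. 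Comparing with \eqref{EQU:HIP-in-H(S2,Z2)} gives $f(\BF)\ov{B}=\wt\BF\ov{B}$.

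The main obstacle is not conceptual but organizational: correctly tracking, in the last case check, which summands $\BF^{[ijl]}_{p,q}$ survive once the module and algebra slots have been fed $f(e)=e_0+e_1+e_2+e_3$ and $f(s)=c_{012}$, and making sure no surviving term is missed. The two observations above cut this down to inspecting the finitely many nonzero values of $\BF^{[012]}$ recorded in \eqref{EQU:Fk0-02-2-012}--\eqref{EQU:F-c-c}, and the single homotopy $\wt\BF_{2,0}$ is then seen to come exactly from \eqref{EQU:F-c-c} with two $c_{012}$-inputs and both module slots landing on $e_1$.
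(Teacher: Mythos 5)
Your proposal is correct and follows essentially the same route as the paper: apply \eqref{EQU:f(F)/B-as-F/A}, note that the image of $f$ is spanned by $c_{012}$ and the unit $e_0+e_1+e_2+e_3$, and read off from \eqref{EQU:Fk0-02-2-012}--\eqref{EQU:F-c-c} that the only surviving values are $\BF^{[012]}_{0,0}(c_{012})(e_2)$, $\BF^{[012]}_{0,0}(e_0)(c_{012})$ and $\BF^{[012]}_{2,0}(\un{c_{012}},\un{c_{012}};e_1)(e_1)$, matching \eqref{EQU:HIP-in-H(S2,Z2)}. Your explicit verification that $f$ is a dg-algebra quasi-isomorphism (via the $K_4$ rank count and the parity argument showing $[c_{012}]\neq 0$) is a welcome addition, since the paper asserts this part without proof.
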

\begin{proof}
The transfered map of $\BF\ov{A}$ from \eqref{EQU:F-on-CS2Z2} is given by applying $f$ to inputs from $B$ and then applying $\BF\ov{A}$; see \eqref{EQU:f(F)/B-as-F/A}. Since the image of $f$ is spanned by $c_{012}$ and $e_0+e_1+e_2+e_3$, it follows from \eqref{EQU:Fk0-02-2-012}-\eqref{EQU:F-c-c} that the only non-zero components (after applying $f$) are:
\[
\BF^{[012]}_{0,0}(c_{012})(e_2)=1, \quad 
\BF^{[012]}_{0,0}(e_0)(c_{012})=1, \quad
\BF^{[012]}_{2,0}(\un{c_{012}},\un{c_{012}};e_1)(e_1)=1.
\]
Thus, we get precisely the non-zero components of $\wt \BF$ from \eqref{EQU:HIP-in-H(S2,Z2)} for $f(\BF)\ov{B}$.
\end{proof}

\section{Computations of the BV algebras}\label{SEC:compute-Hochschild-BV}

We now compute the BV algebras on Hochschild cohomology induced by the homotopy inner products considered in examples \ref{EX:HS2Z2-no-homotopies} and \ref{EX:HS2Z2-with-homotopies} in section \ref{SEC:compute-bimodule-maps}. In particular, we complete the computations from theorem \ref{THM:main-theorem} for the $2$-sphere by computing $HH^\bu(H^\bu,\B{H}^\bu)$ and $HH^\bu(H^\bu,\B{H}_\bu)$ for $H^\bu=\B{H}^\bu=H^\bu(S^2;\Z_2)$ and $\B{H}_\bu=H_\bu(S^2;\Z_2)$, and we will check the formulas for the maps $\CF$ and $\wt\CF$ in \eqref{EQU:Hoch-bimodule-map-H} stated in theorem \ref{THM:main-theorem}. In the section the ground ring is $R=\Z_2$.

\subsection{Hochschild cohomology $HH^\bu(H^\bu,\B{H}^\bu)$ and the cup product}\label{SUBSEC:HH(H,H)-and-cup}

Denote by $H^\bu=H^\bu(S^2;\Z_2)\cong  \Z_2.e\oplus \Z_2.s$ from example \ref{EX:HS2Z2-no-homotopies} with $|e|=0$ and $|s|=-2$. The generators of normalized Hochschild cochains $\NCH^\bu(H^\bu,\B{H}^\bu)$ are $\phi_k$, $\psi_k$ for $k\geq 0$ given by
\[
\phi_k(\underbrace{\un{s},\dots,\un{s}}_{k\text{ many}}):=e, \quad\quad \psi_k(\underbrace{\un{s},\dots,\un{s}}_{k\text{ many}}):=s.
\]
Note that $|\phi_k|=k$, $|\psi_k|=k-2$, since the shifted generator $s$ is of degree $|\un{s}|=-1$.
Applying the Hochschild differential (see \ref{SUBSEC:basic-setup}, and using $d=0$ and $s\cdot s=0$, $s\cdot e=e\cdot s=s$), we see that all $\phi_k$ and $\psi_k$ are closed (over $\Z_2$). Thus,
\[
HH^\bu(H^\bu,\B{H}^\bu)\cong\bigoplus_{k\geq 0} \Z_2.\phi_k\oplus \bigoplus_{k\geq 0} \Z_2.\psi_k.
\]

The cup product \eqref{EQU:cup-product} is readily checked to be $\phi_k\smile \phi_\ell=\phi_{k+\ell}$, $\psi_k\smile \psi_\ell=0$, and $\phi_k\smile \psi_\ell=\psi_\ell\smile \phi_k=\psi_{k+\ell}$.

\subsection{Hochschild cohomology $HH^\bu(H^\bu,\B{H}_\bu)$ and Connes' $\CB$-operator}\label{SUBSEC:HH(H,H*)-and-B}

Denote by $\B{H}_\bu=H_\bu(S^2;\Z_2)\cong\Z_2.e^*\oplus \Z_2.s^*$ from example \ref{EX:HS2Z2-no-homotopies}, with $|e^*|=0$ and $|s^*|=2$ and  $s.e^*=e^*.s=0, s.s^*=s^*.s=e^*$. The generators of normalized Hochschild cochains $\NCH^\bu(H^\bu,\B{H}_\bu)$ are $\theta_k$, $\chi_k$ for $k\geq 0$ given by
\[
\theta_k(\underbrace{\un{s},\dots,\un{s}}_{k\text{ many}}):=s^*, \quad\quad \chi_k(\underbrace{\un{s},\dots,\un{s}}_{k\text{ many}}):=e^*.
\]
Note that $|\theta_k|=k+2$, $|\chi_k|=k$. All $\theta_k$ and $\chi_k$ are closed under the Hochschild cochain differential (over $\Z_2$), so that
\[
HH^\bu(H^\bu,\B{H}_\bu)\cong\bigoplus_{k\geq 0} \Z_2.\theta_k\oplus \bigoplus_{k\geq 0} \Z_2.\chi_k.
\]

Whereas $HH^\bu(H^\bu,\B{H}^\bu)$ has the cup product \eqref{EQU:cup-product}, $HH^\bu(H^\bu,\B{H}_\bu)$ has Connes' $\CB$-operator \eqref{EQU:B-operator-Def}. We compute $\CB$ from \eqref{EQU:B-operator-Def} to be $(\CB(\varphi))(\underbrace{\un{s},\dots, \un{s}}_{(r-1) \text{ many}})(a):=r\cdot \varphi(\un{s},\dots,\un{s}, \un{a},\un{s},\dots, \un{s})(e)$, which applied to $\theta_k$ and $\chi_k$ yields:
\[
\CB(\theta_r)=0, \quad \CB(\chi_r)=r\cdot \theta_{r-1}.
\]
This confirms the expressions for $\CB$ stated in theorem \ref{THM:main-theorem}.

\subsection{$\CF$ induced by example \ref{EX:HS2Z2-no-homotopies} and the BV delta $\Delta^\BF$}\label{SUBSEC:Delta-F-strict}

The dg-bimodule map $\BF:\B{H}^\bu\to \B{H}_\bu$, $\BF(s)=e^*$ and $\BF(e)=s^*$, from example \ref{EX:HS2Z2-no-homotopies} induced a map on Hochschild cohomologies $\CF:HH^\bu(H^\bu,\B{H}^\bu)\to HH^\bu(H^\bu,\B{H}_\bu)$ via \eqref{EQU:CH(F)}. Since $\BF$ only has a lowest component $\BF_{0,0}$ this is simply
\[
\CF(\phi_k)=\BF\circ \phi_k=\theta_k, \quad \CF(\psi_k)=\BF\circ \psi_k=\chi_k.
\]
$\CF$ is clearly an isomorphism, and confirms $\CF$ from theorem \ref{THM:main-theorem}.

We also compute $\Delta^\BF$ and check it is equal to $\CB$ transferred to $HH^\bu(H^\bu,\B{H}^\bu)$, i.e., condition \eqref{ITEM:B=Delta} from definition \ref{DEF:PD-structure} for a Poincar\'e duality structure. From \eqref{EQU:DEF-of-ZF}, we get for $\varphi\in Hom(\un{H^\bu}^{\otimes r}, \B{H}^\bu)$ that $\CZ^\BF(\varphi)(\un{s},\dots,\un{s})(a)=r\cdot \BF(e)(\varphi(\un{s},\dots,\un{a},\dots,\un{s}))$, which gives
\[
 \CZ^\BF(\phi_k)=0=\CB(\theta_k)=\CB\circ\CF(\phi_k), \quad
 \CZ^\BF(\psi_k)=k\cdot \theta_{k-1}=\CB(\chi_k)=\CB\circ\CF(\psi_k).
\]
Therefore, $\CF^{-1}\circ \CB \circ \CF=\CF^{-1}\circ \CZ^\BF=\De^\BF$.

\subsection{$\wt\CF$ induced by example \ref{EX:HS2Z2-with-homotopies} and the BV delta $\Delta^{\wt\BF}$}\label{SUBSEC:Delta-F-homotopies}

Consider the dg-bimodule map $\wt\BF:\B{H}^\bu\to \B{H}_\bu$ from example \ref{EX:HS2Z2-with-homotopies}. Since $\wt\BF=\wt \BF_{0,0}+\wt \BF_{2,0}$ is $\wt\BF_{0,0}(s)=e^*$, $\wt\BF_{0,0}(e)=s^*$ with one extra homotopy $\wt\BF_{2,0}$ which is non-zero only for $\wt\BF_{2,0}(\un{s},\un{s};e):=e^*$ (see \eqref{EQU:HIP-in-H(S2,Z2)}), we get
\[
\wt\CF(\phi_k)=\wt\BF_{0,0}\circ \phi_k+\wt\BF_{2,0}(-,-;\phi_k(-))=\theta_k+\chi_{k+2}, \quad \wt\CF(\psi_k)=\wt\BF_{0,0}\circ \psi_k=\chi_k.
\]
This confirms $\wt\CF$ from theorem \ref{THM:main-theorem}. As was noted in theorem \ref{THM:main-theorem}, $\wt\CF$ is an isomorphism with inverse $\wt{\CF}^{-1}$ given by $\wt{\CF}^{-1}(\theta_k)=\phi_k+\psi_{k+2}$ and $\wt{\CF}^{-1}(\chi_k)=\psi_k$. 

We also check that in this case $\wt\CF^{-1}\circ \CB \circ\wt\CF=\De^{\wt\BF}$. From \eqref{EQU:DEF-of-ZF}, we get for $\varphi\in Hom(\un{H^\bu}^{\otimes r}, \B{H}^\bu)$, that
\begin{align*}
\CZ^{\wt \BF}(\varphi)(\un{s},\dots,\un{s})(a)=&
r\cdot \wt\BF_{0,0}(e)(\varphi(\un{s},\dots,\un{a},\dots,\un{s}))
+r\cdot  \wt\BF_{2,0}(\un{s},\un{s};e)(\varphi(\un{s},\dots,\un{a},\dots,\un{s}))
\\
&+\wt\BF_{2,0}(\un{\varphi(\un{s},\dots,\un{s})},\un{s};e)(a)
+\wt\BF_{2,0}(\un{s},\un{\varphi(\un{s},\dots,\un{s})};e)(a)
\end{align*}
which gives
\[
 \CZ^{\wt \BF}(\phi_k)=0+k\cdot \theta_{k+1}+0+0=k\cdot \theta_{k+1}
, \quad
 \CZ^{\wt \BF}(\psi_k)=k\cdot \theta_{k-1}+0+\chi_{k+1}+\chi_{k+1}=k\cdot \theta_{k-1}.
 \quad
\]
Since $\CB \circ\wt\CF(\phi_{k})=\CB(\theta_k+\chi_{k+2})=k\cdot \theta_{k+1}$ and $\CB \circ\wt\CF(\psi_{k})=\CB(\chi_k)=k\cdot \theta_{k-1}$, this shows that $ \CZ^{\wt \BF}=\CB \circ\wt\CF$ and, thus, ${\wt \CF}^{-1}\circ \CB \circ {\wt \CF}={\wt \CF}^{-1}\circ \CZ^{\wt \BF}=\De^{\wt \BF}$. Explicitly, $\De^{\wt \BF}$ is given by:
\begin{align*}
\De^{\wt \BF}(\phi_k)& ={\wt \CF}^{-1}\circ \CZ^{\wt \BF}(\phi_k)={\wt \CF}^{-1}(k\cdot \theta_{k+1})=k\cdot(\phi_{k+1}+\psi_{k+3})
\\
\De^{\wt \BF}(\psi_k)& ={\wt \CF}^{-1}\circ \CZ^{\wt \BF}(\psi_k)={\wt \CF}^{-1}(k\cdot \theta_{k-1})=k\cdot(\phi_{k-1}+\psi_{k+1})
\end{align*}
This confirms $\wt{\De}$ from \eqref{EQU:tilde-Delta-on-HH} and thus theorem \ref{THM:main-theorem}.

\subsection{Example of a homotopy inner product that is not a Poincar\'e duality structure}\label{SUBSEC:PD-counterexample}
We end this section with an example of a homotopy inner product which is not a Poincar\'e duality structure due to its failure of condition \eqref{ITEM:B=Delta} from definition \ref{DEF:PD-structure} (while condition \eqref{ITEM:Fsharp-HH-iso} holds). We therefore do not get a BV algebra on Hochschild cohomology $HH^\bu(A,\B{A})$ with underlying algebra $A$, since there is no $\Delta$ operator, but only an operator, $\CF^{-1}\circ \CB \circ \CF$, that squares to zero, and a \emph{different} operator, $\De^{\BF}$, whose deviation from being a derivation is a Gerstenhaber bracket; cf.  lemma \ref{LEM:BV-Delta-conditions}.

Let $A=\Z_2.e\oplus \Z_2.b\oplus \Z_2.c$ with zero differential, $|e|=0$, $|b|=-1$, $|c|=-2$, and where $e$ is the unit, $b\cdot b=c$ and all other products vanish. We denote $\B{A}=A$ and its dual $\B{A}^*=\Z_2.e^*\oplus \Z_2.b^*\oplus \Z_2.c^*$ where $e^*,  b^*, c^*$ are the duals of $e,b,c$, respectively, so that $|e^*|=0$, $|b^*|=1$, $|c^*|=2$. Define the homotopy inner product $\BF$ whose only non-vanishing components are $\BF_{0,0}$ and $\BF_{3,0}$ given by
\begin{align*}
& \BF_{0,0}(c)(e)= \BF_{0,0}(b)(b)= \BF_{0,0}(e)(c)=1,\\
& \BF_{3,0}(\un{c},\un{b},\un{c};e)(e)= \BF_{3,0}(\un{b},\un{b},\un{b};c)(e)= \BF_{3,0}(\un{b},\un{b},\un{b};e)(c)= \BF_{3,0}(\un{b},\un{b},\un{b};b)(b)=1.
\end{align*}
One can check explicitly that the conditions for a homotopy inner product $0=\delt \BF=\delt_1 \BF$, where $\delt_1$ is given by \eqref{EQU:de1(F)}, are satisfied.

Note, that $\BF$ induces an isomorphism on Hochschild cochains $\CF:\NCH^\bu(A,\B{A})\stackrel{\cong}\longrightarrow \NCH^\bu(A,\B{A}^*)$, since $\BF_{0,0}:\B{A}\to \B{A}^*$ is an isomorphism, and thus $\BF$ satisfies \ref{DEF:PD-structure} condition \eqref{ITEM:Fsharp-HH-iso}.

We next show that \ref{DEF:PD-structure} condition \eqref{ITEM:B=Delta} fails, i.e., that $ \CB \circ \CF\neq \CZ^\BF$. Consider $\varphi\in Hom(\un{A}^{\otimes 0},\B{A})$, $\varphi(1)=e$. Note that $\varphi$ is a closed element in the Hochschild cochains $\varphi\in \NCH^\bu(A,\B{A})$. Plugging $\varphi$ into \eqref{EQU:DEF-of-ZF} gives $\CZ^\BF(\varphi)=0$, since each individual summand of \eqref{EQU:DEF-of-ZF} vanishes. Next, to compute $\CF(\varphi)$, we see the only non-zero evaluation of $\CF(\varphi)$ are (the ones with ``$e$'' in the input spot for the module of $\BF$):
\begin{align*}
&\CF(\varphi)(1)=\BF_{0,0}(\varphi(1))=c^*\\
&\CF(\varphi)(\un{c},\un{b},\un{c})=\BF_{3,0}(\un{c},\un{b},\un{c};\varphi(1))=e^*\\
&\CF(\varphi)(\un{b},\un{b},\un{b})=\BF_{3,0}(\un{b},\un{b},\un{b};\varphi(1))=c^*
\end{align*}
Applying $\CB$ from \eqref{EQU:B-operator-Def} (note that the unit in $A$ is in this example written as $e$) to this is only non-zero on the middle term, for which we get the only non-zero evaluations:
\[
\CB(\CF(\varphi))(\un{c},\un{b})(c)=1, \quad
\CB(\CF(\varphi))(\un{b},\un{c})(c)=1, \quad
\CB(\CF(\varphi))(\un{c},\un{c})(b)=1.
\]
Then, $\CB(\CF(\varphi))$ is non-vanishing, and closed in $\NCH^\bu(A,\B{A}^*)$, which follows either because $\varphi$ is closed and $\CB\circ\CF$ is a chain map or by direct inspection.

{\bf Claim:} $\CB(\CF(\varphi))$ is not exact in $\NCH^\bu(A,\B{A}^*)$.

The claim implies that $\CB \circ \CF(\varphi)\neq 0=\CZ^\BF(\varphi)$ in $HH^\bu(A,\B{A}^*)$, and so $\BF$ is not a Poincar\'e duality structure as it fails condition \eqref{ITEM:B=Delta} from definition \ref{DEF:PD-structure}.

\begin{proof}[Proof of the claim:]
To prove the claim, note that $d=0$ in $A$ implies that the Hochschild differential $\delt=\delt_0+\delt_1=\delt_1$ from \ref{SUBSEC:basic-setup} must increase the number of tensor factors of $\un{A}$ for the inputs. Thus any $\wt \varphi$ which makes $\CB(\CF(\varphi))$ exact, i.e., $\delt(\wt \varphi)=\CB(\CF(\varphi))$, must have non-vanishing components with exaclty one tensor input. Since the degree $|\CB(\CF(\varphi))|=3$, it must be that $|\wt \varphi|=4$. However, all Hochschild cochains in $\NCH^\bu(A,\B{A}^*)$ with one or no input are of degree $\leq 3$. Thus, $\CB(\CF(\varphi))$ cannot be exact.
\end{proof}

\appendix

\section{Proof of equations \eqref{EQU:D(F01)=F0+F1} and \eqref{EQU:D(F012)=F01+F02+F12}}\label{APP:Proof-of-HIP-on-2-simplex}

In this appendix we prove the identity \eqref{EQU:D(F01)=F0+F1}, i.e., $\delt \BF^{[01]}=\BF^{[0]}+\BF^{[1]}$, for the maps $\BF^{[01]}$ given by \eqref{EQU:Fk0-for-1-simplicies} from example \ref{EX:F-for-the-1-simplex}, as well as the identity \eqref{EQU:D(F012)=F01+F02+F12}, i.e., $\delt\BF^{[012]}=\BF^{[01]}+\BF^{[02]}+\BF^{[12]}$, for the maps $\BF^{[012]}$ given by \eqref{EQU:Fk0-02-2-012}-\eqref{EQU:F-c-c} from example \ref{EX:F-for-the-2-simplex}. The ground ring in this appendix is $R=\Z_2$.

\subsection{Preliminaries}

Any $\BF_{p,q}$ is a map $\BF_{p,q}:\un{A}^{\ot p}\ot \B{A}\ot \un{A}^{\ot q}\to \B{A}^*$, which is an element of $\BF_{p,q}\in (\un{A}^*)^{\ot p}\ot \B{A}^*\ot (\un{A}^*)^{\ot q}\ot \B{A}^*$. Consider $A^{[01]}$ with generators $e_0, e_1, b_{01}$, respectively $A^{[012]}$ with generators $e_0, e_1, e_2, b_{01}, b_{12}, b_{02}, c_{012}$). For a multi-index $I\in \{0, 1, 2, 01, 02, 12, 012\}$, define $a^*_I$ to be $a^*_0:=e^*_0$, $a^*_1:=e^*_1$, $a^*_2:=e^*_2$, $a^*_{01}:=b^*_{01}$, $a^*_{02}:=b^*_{02}$, $a^*_{12}:=b^*_{12}$, $a^*_{012}:=c^*_{012}$. With this we use the following notation, for a given sequence of multi-indices $I_1,\dots, I_{p+q+2}$:
\begin{multline}
\hip{I_1,\dots,I_p}{I_{p+1}}{I_{p+2},\dots, I_{p+q+1}}{I_{p+q+2}}
\\
:= a^*_{I_1}\ot\dots \ot a^*_{I_{p}}\ot a^*_{I_{p+1}}\ot a^*_{I_{p+2}}\ot\dots\ot a^*_{I_{p+q+1}}\ot a^*_{I_{p+q+2}}
 \in (\un{A}^*)^{\ot p}\ot \B{A}^*\ot (\un{A}^*)^{\ot q}\ot \B{A}^*
\end{multline}
Moreover, placing a tilde under the multi-index sums over any number of these indices, i.e., for tildes at positions $1\leq k_1<k_2<\dots<k_r\leq p$, we define
\begin{multline}
\hip{I_1,\dots,\rep{I_{k_1}}, \dots,\rep{I_{k_2}}, \dots,I_{p}}{I_{p+1}}{I_{p+2},\dots, I_{p+q+1}}{I_{p+q+2}}
\\
:=\sum_{n_1,n_2,\dots,n_r \geq 0}
\hip{I_1,\dots,\underbrace{{I_{k_1}},\dots,{I_{k_1}}}_{n_1\text{ many}}, \dots,\underbrace{{I_{k_2}},\dots,{I_{k_2}}}_{n_2\text{ many}}, \dots,I_{p}}{I_{p+1}}{I_{p+2},\dots, I_{p+q+1}}{I_{p+q+2}}
\end{multline}
Since our examples are chosen to be ``strict'' on the right (see observation \ref{OBS:not-bimodule}), we only need higher homotopies (and thus tildes) on the first $p$ tensor factors, but not on the other $q$ tensor factors of $\un{A}^*$.

We apply $\delt=\delt_0+\delt_1$ from \eqref{EQU:de0(F)} and \eqref{EQU:de1(F)} to the above. Here, $\delt_0$ applies the differential of $A^*$, which, in the multi-indiex notation, removes one index, (for example, $01\mapsto 0+1$, $012\mapsto 01+02+12$, etc.). $\delt_1$ applies a multiplication, which on $A^*$ applies the Alexander-Whitney coproduct (for example, $01\mapsto 0\ot 01+01\ot 1$, $012\mapsto 0\ot 012+01\ot12+012\ot 2$, etc.); see also the calculation below.

\subsection{Proof of \eqref{EQU:D(F01)=F0+F1}}
$\BF^{[01]}$ from \eqref{EQU:Fk0-for-1-simplicies} is given by $\BF^{[01]}=\hip{\rep{01}}{0}{}{01}+\hip{\rep{01}}{01}{}{1}$. We calculate $\delt=\delt_0+\delt_1$ applied to $\BF^{[01]}$.
\begin{align*}
&\delt_0 (\hip{\rep{01}}{0}{}{01})
=&\hip{\rep{01}, 0, \rep{01}}{0}{}{01}\cancl{01}  &&+ \hip{\rep{01},1, \rep{01}}{0}{}{01}\cancl{02}
\\ &&\,\,+ \hip{\rep{01}}{0}{}{0}\cancl{03}  &&+ \hip{\rep{01}}{0}{}{1}\cancl{04}
\\
&\delt_1 (\hip{\rep{01}}{0}{}{01})
=& \hip{\rep{01}, 0,01,\rep{01}}{0}{}{01}\cancl{01}  &&+ \hip{\rep{01},01,1,\rep{01}}{0}{}{01}\cancl{02}
\\&&+ \hip{\rep{01},0}{0}{}{01}\cancl{01}  &&+ \hip{\rep{01}}{0}{0}{01}\cancl{05}
\\&&+ \hip{\rep{01}}{0}{0}{01}\cancl{05}  &&+ \hip{\rep{01}}{0}{01}{1}\cancl{06}
\\&&+ \hip{01,\rep{01}}{0}{}{0}\cancl{03}  &&+ \hip{1,\rep{01}}{0}{}{01}\cancl{02}
\\
&\delt_0 (\hip{\rep{01}}{01}{}{1})
=& \hip{\rep{01},0,\rep{01}}{01}{}{1}\cancl{07}  &&+ \hip{\rep{01},1,\rep{01}}{01}{}{1}\cancl{08}
\\ &&\,\,+ \hip{\rep{01}}{0}{}{1}\cancl{04}  &&+ \hip{\rep{01}}{1}{}{1}\cancl{09}
\\
&\delt_1 (\hip{\rep{01}}{01}{}{1})
=& \hip{\rep{01},0,01,\rep{01}}{01}{}{1}\cancl{07}  &&+ \hip{\rep{01},01,1,\rep{01}}{01}{}{1}\cancl{08}
\\&&+ \hip{\rep{01},0}{01}{}{1}\cancl{07}  &&+ \hip{\rep{01},01}{1}{}{1}\cancl{09}
\\&&+ \hip{\rep{01}}{0}{01}{1}\cancl{06}  &&+ \hip{\rep{01}}{01}{1}{1}\cancl{10}
\\&&+ \hip{\rep{01}}{01}{1}{1}\cancl{10} && + \hip{1,\rep{01}}{01}{}{1}\cancl{08}
\end{align*}
The gray box on the right of each expression indicates which terms can be combined. Note that for $\cancl{01}$, $\cancl{02}$, $\cancl{07}$, and $\cancl{08}$, there are always 3 terms that can be combined and cancel. In fact, all terms cancel, except for terms labeled $\cancl{03}$, which has a left over term of $\hip{}{0}{}{0}$, and terms labeled $\cancl{09}$, which has a left over term of $\hip{}{1}{}{1}$. These two left-over terms are $\BF^{[0]}$ and $\BF^{[1]}$, which shows that $\delt \BF^{[01]}=\BF^{[0]}+\BF^{[1]}$, i.e., equation \eqref{EQU:D(F01)=F0+F1}.

\subsection{Proof of \eqref{EQU:D(F012)=F01+F02+F12}}

$\BF^{[012]}$ from \eqref{EQU:Fk0-02-2-012}-\eqref{EQU:F-c-c} is given by
\begin{align*}
\BF^{[012]}=&\hip{\rep{02}}{012}{}{2}+\hip{\rep{02}}{0}{}{012}+\hip{\rep{02}}{01}{}{12}
\\
&+\hip{\rep{01},012,\rep{02}}{0}{}{01}+\hip{\rep{01},012,\rep{02}}{01}{}{1}
\\
&+\hip{\rep{02},012,\rep{12}}{1}{}{12}+\hip{\rep{02},012,\rep{12}}{12}{}{2}
\\
&+\hip{\rep{01},012,\rep{02},012,\rep{12}}{1}{}{1}
\end{align*}
We calculate $\delt=\delt_0+\delt_1$ applied to $\BF^{[012]}$. Again, the gray box indicates which terms combine and cancel. Left-over terms will be collected below.
\begin{align*} %--------------------------------3 cases-------------------------
\delt_0 (\hip{\rep{02}}{012}{}{2})=
&& \hip{\rep{02},0,\rep{02}}{012}{}{2}\cancl{01}    &&+ \hip{\rep{02},2,\rep{02}}{012}{}{2}\cancl{02}
\\&&+\hip{\rep{02}}{01}{}{2}\cancl{03}     &&+  \hip{\rep{02}}{02}{}{2}\cancl{04}
\\&&  +\hip{\rep{02}}{12}{}{2}\cancl{05}
\\
\delt_1 (\hip{\rep{02}}{012}{}{2})=
&& \hip{\rep{02},0,02,\rep{02}}{012}{}{2}\cancl{01}  && +\hip{\rep{02},02,2,\rep{02}}{012}{}{2}\cancl{02} 
\\&& +\hip{\rep{02},0}{012}{}{2}\cancl{01}  && +\hip{\rep{02},01}{12}{}{2}\cancl{06} 
\\&& +\hip{\rep{02},012}{2}{}{2}\cancl{07}  && +\hip{\rep{02}}{012}{2}{2}\cancl{08} 
\\&& +\hip{\rep{02}}{01}{12}{2}\cancl{09}  && +\hip{\rep{02}}{0}{012}{2}\cancl{10} 
\\&& +\hip{\rep{02}}{012}{2}{2}\cancl{08}  && +\hip{2,\rep{02}}{012}{}{2}\cancl{02} 
\\
\delt_0 (\hip{\rep{02}}{0}{}{012})=
&&  \hip{\rep{02},0,\rep{02}}{0}{}{012}\cancl{11}    && +\hip{\rep{02}, 2,\rep{02}}{0}{}{012}\cancl{12}    
\\&&+ \hip{\rep{02}}{0}{}{01}\cancl{13}    && + \hip{\rep{02}}{0}{}{02}\cancl{14}
\\&&+\hip{\rep{02}}{0}{}{12}\cancl{15}    
\\
\delt_1 (\hip{\rep{02}}{0}{}{012})=
&& \hip{\rep{02},0,02,\rep{02}}{0}{}{012}\cancl{11} && +\hip{\rep{02},02,2,\rep{02}}{0}{}{012}\cancl{12}
\\&& \hip{\rep{02},0}{0}{}{012}\cancl{11} && +\hip{\rep{02}}{0}{0}{012}\cancl{16}
\\&& \hip{\rep{02}}{0}{0}{012}\cancl{16} && +\hip{\rep{02}}{0}{01}{12}\cancl{17}
\\&& \hip{\rep{02}}{0}{012}{2}\cancl{10} && +\hip{2,\rep{02}}{0}{}{012}\cancl{12}
\\&& \hip{12,\rep{02}}{0}{}{01}\cancl{18} && +\hip{012,\rep{02}}{0}{}{0}\cancl{19}
\\
\delt_0 (\hip{\rep{02}}{01}{}{12})=
&&  \hip{\rep{02},0,\rep{02}}{01}{}{12}\cancl{20}   && +\hip{\rep{02},2,\rep{02}}{01}{}{12}\cancl{21}   
\\&&+ \hip{\rep{02}}{0}{}{12}\cancl{15}   && +\hip{\rep{02}}{1}{}{12}\cancl{22}   
\\&&+\hip{\rep{02}}{01}{}{1}\cancl{23}   && + \hip{\rep{02}}{01}{}{2}\cancl{03}   
\\
\delt_1 (\hip{\rep{02}}{01}{}{12})=
&&\hip{\rep{02},0,02,\rep{02}}{01}{}{12}\cancl{20}  && + \hip{\rep{02},02,2,\rep{02}}{01}{}{12}\cancl{21}
\\&& +\hip{\rep{02},0}{01}{}{12}\cancl{20}  && + \hip{\rep{02},01}{1}{}{12}\cancl{24}
\\&& +\hip{\rep{02}}{01}{1}{12}\cancl{25}  && + \hip{\rep{02}}{0}{01}{12}\cancl{17}
\\&& +\hip{\rep{02}}{01}{1}{12}\cancl{25}  && + \hip{\rep{02}}{01}{12}{2}\cancl{09}
\\&& +\hip{2,\rep{02}}{01}{}{12}\cancl{21}  && + \hip{12,\rep{02}}{01}{}{1}\cancl{26}
\end{align*}
\begin{align*} %--------------------------------4 cases-------------------------
 \delt_0 (\hip{\rep{01},012,\rep{02}}{0}{}{01})=\hspace{-5cm}
\\ &&  \hip{\rep{01},0,\rep{01},012,\rep{02}}{0}{}{01}\cancl{27}   && +\hip{\rep{01},1,\rep{01},012,\rep{02}}{0}{}{01}\cancl{28}
\\&&+ \hip{\rep{01},01,\rep{02}}{0}{}{01}\cancl{13}&& +\hip{\rep{01},02,\rep{02}}{0}{}{01}\cancl{13}
\\&&+ \hip{\rep{01},12,\rep{02}}{0}{}{01}\cancl{18}
\\ && + \hip{\rep{01},012,\rep{02},0,\rep{02}}{0}{}{01}\cancl{29} &&+\hip{\rep{01},012,\rep{02},2,\rep{02}}{0}{}{01}\cancl{30}
\\ &&+ \hip{\rep{01},012,\rep{02}}{0}{}{0}\cancl{19} && +\hip{\rep{01},012,\rep{02}}{0}{}{1}\cancl{31}
\\
\delt_1 (\hip{\rep{01},012,\rep{02}}{0}{}{01})=\hspace{-5cm}
\\&& \hip{\rep{01},0,01,\rep{01},012,\rep{02}}{0}{}{01}\cancl{27} && +\hip{\rep{01},01,1,\rep{01},012,\rep{02}}{0}{}{01}\cancl{28}
\\&&+\hip{\rep{01},0,012,\rep{02}}{0}{}{01}\cancl{27} && +\hip{\rep{01},01,12,\rep{02}}{0}{}{01}\cancl{18}
\\&&+\hip{\rep{01},012,2,\rep{02}}{0}{}{01}\cancl{30} && 
\\&&+\hip{\rep{01},012,\rep{02},0,02,\rep{02}}{0}{}{01}\cancl{29} && +\hip{\rep{01},012,\rep{02},02,2,\rep{02}}{0}{}{01}\cancl{30}
\\&&+\hip{\rep{01},012,\rep{02},0}{0}{}{01}\cancl{29} && +\hip{\rep{01},012,\rep{02}}{0}{0}{01}\cancl{32}
\\&&+\hip{\rep{01},012,\rep{02}}{0}{0}{01}\cancl{32} && +\hip{\rep{01},012,\rep{02}}{0}{01}{1}\cancl{33}
\\&&+\hip{1,\rep{01},012,\rep{02}}{0}{}{01}\cancl{28} && +\hip{01,\rep{01},012,\rep{02}}{0}{}{0}\cancl{19}
\\
\delt_0 (\hip{\rep{01},012,\rep{02}}{01}{}{1})=\hspace{-5cm}
\\&& \hip{\rep{01},0,\rep{01},012,\rep{02}}{01}{}{1}\cancl{34}  && +\hip{\rep{01},1,\rep{01},012,\rep{02}}{01}{}{1}\cancl{35}  
\\&&+ \hip{\rep{01},01,\rep{02}}{01}{}{1}\cancl{23}  && +\hip{\rep{01},02,\rep{02}}{01}{}{1}\cancl{23}  
\\&&+ \hip{\rep{01},12,\rep{02}}{01}{}{1}\cancl{26}  && 
\\&&+ \hip{\rep{01},012,\rep{02},0,\rep{02}}{01}{}{1}\cancl{36}  && +\hip{\rep{01},012,\rep{02},2,\rep{02}}{01}{}{1}\cancl{37}  
\\&&+ \hip{\rep{01},012,\rep{02}}{0}{}{1}\cancl{31}  && +\hip{\rep{01},012,\rep{02}}{1}{}{1}\cancl{38}  
\\
\delt_1 (\hip{\rep{01},012,\rep{02}}{01}{}{1})=\hspace{-5cm}
\\&&\hip{\rep{01},0,01,\rep{01},012,\rep{02}}{01}{}{1}\cancl{34} && +\hip{\rep{01},01,1,\rep{01},012,\rep{02}}{01}{}{1}\cancl{35}
\\&&+\hip{\rep{01},0,012,\rep{02}}{01}{}{1}\cancl{34} && +\hip{\rep{01},01,12,\rep{02}}{01}{}{1}\cancl{26}
\\&&+\hip{\rep{01},012,2,\rep{02}}{01}{}{1}\cancl{37} && 
\\&&+\hip{\rep{01},012,\rep{02},0,02,\rep{02}}{01}{}{1}\cancl{36} && +\hip{\rep{01},012,\rep{02},02,2,\rep{02}}{01}{}{1}\cancl{37}
\\&&+\hip{\rep{01},012,\rep{02},0}{01}{}{1}\cancl{36} && +\hip{\rep{01},012,\rep{02},01}{1}{}{1}\cancl{39}
\\&&+\hip{\rep{01},012,\rep{02}}{01}{1}{1}\cancl{40} && +\hip{\rep{01},012,\rep{02}}{0}{01}{1}\cancl{33}
\\&&+\hip{\rep{01},012,\rep{02}}{01}{1}{1}\cancl{40} && +\hip{1,\rep{01},012,\rep{02}}{01}{}{1}\cancl{35}
\\
\delt_0 (\hip{\rep{02},012,\rep{12}}{1}{}{12})=\hspace{-5cm}
\\&& \hip{\rep{02},0,\rep{02},012,\rep{12}}{1}{}{12}\cancl{41}  && +  \hip{\rep{02},2,\rep{02},012,\rep{12}}{1}{}{12}\cancl{42}  
\\&&+ \hip{\rep{02},01,\rep{12}}{1}{}{12}\cancl{24}  && +  \hip{\rep{02},02,\rep{12}}{1}{}{12}\cancl{22}  
\\&&+ \hip{\rep{02},12,\rep{12}}{1}{}{12}\cancl{22}  &&
\\&&+ \hip{\rep{02},012,\rep{12},1,\rep{12}}{1}{}{12}\cancl{43}  && +  \hip{\rep{02},012,\rep{12},2,\rep{12}}{1}{}{12}\cancl{44}  
\\&&+ \hip{\rep{02},012,\rep{12}}{1}{}{1}\cancl{38}  && +  \hip{\rep{02},012,\rep{12}}{1}{}{2}\cancl{45}  
\\
\delt_1 (\hip{\rep{02},012,\rep{12}}{1}{}{12})=\hspace{-5cm}
\\&&\hip{\rep{02},0,02,\rep{02},012,\rep{12}}{1}{}{12}\cancl{41} && +\hip{\rep{02},02,2,\rep{02},012,\rep{12}}{1}{}{12}\cancl{42}
\\&&+\hip{\rep{02},0,012,\rep{12}}{1}{}{12}\cancl{41} && +\hip{\rep{02},01,12,\rep{12}}{1}{}{12}\cancl{24}
\\&&+\hip{\rep{02},012,2,\rep{12}}{1}{}{12}\cancl{44} && 
\\&&+\hip{\rep{02},012,\rep{12},1,12,\rep{12}}{1}{}{12}\cancl{43} && +\hip{\rep{02},012,\rep{12},12,2,\rep{12}}{1}{}{12}\cancl{44}
\\&&+\hip{\rep{02},012,\rep{12},1}{1}{}{12}\cancl{43} && +\hip{\rep{02},012,\rep{12}}{1}{1}{12}\cancl{46}
\\&&+\hip{\rep{02},012,\rep{12}}{1}{1}{12}\cancl{46} && +\hip{\rep{02},012,\rep{12}}{1}{12}{2}\cancl{47}
\\&&+\hip{2,\rep{02},012,\rep{12}}{1}{}{12}\cancl{42} && +\hip{12,\rep{02},012,\rep{12}}{1}{}{1}\cancl{48}
\\
\delt_0 (\hip{\rep{02},012,\rep{12}}{12}{}{2})=\hspace{-5cm}
\\&& \hip{\rep{02},0,\rep{02},012,\rep{12}}{12}{}{2}\cancl{49} && +\hip{\rep{02},2,\rep{02},012,\rep{12}}{12}{}{2}\cancl{50}
\\&&+ \hip{\rep{02},01,\rep{12}}{12}{}{2}\cancl{06} && +\hip{\rep{02},02,\rep{12}}{12}{}{2}\cancl{05}
\\&&+ \hip{\rep{02},12,\rep{12}}{12}{}{2}\cancl{05} && 
\\&&+ \hip{\rep{02},012,\rep{12},1,\rep{12}}{12}{}{2}\cancl{51} && +\hip{\rep{02},012,\rep{12},2,\rep{12}}{12}{}{2}\cancl{52}
\\&&+ \hip{\rep{02},012,\rep{12}}{1}{}{2}\cancl{45} && +\hip{\rep{02},012,\rep{12}}{2}{}{2}\cancl{07}
\\
\delt_1 (\hip{\rep{02},012,\rep{12}}{12}{}{2})=\hspace{-5cm}
\\&&\hip{\rep{02},0,02,\rep{02},012,\rep{12}}{12}{}{2}\cancl{49} && +\hip{\rep{02},02,2,\rep{02},012,\rep{12}}{12}{}{2}\cancl{50}
\\&&+\hip{\rep{02},0,012,\rep{12}}{12}{}{2}\cancl{49} && +\hip{\rep{02},01,12,\rep{12}}{12}{}{2}\cancl{06}
\\&&+\hip{\rep{02},012,2,\rep{12}}{12}{}{2}\cancl{52} && 
\\&&+\hip{\rep{02},012,\rep{12},1,12,\rep{12}}{12}{}{2}\cancl{51} && +\hip{\rep{02},012,\rep{12},12,2,\rep{12}}{12}{}{2}\cancl{52}
\\&&+\hip{\rep{02},012,\rep{12},1}{12}{}{2}\cancl{51} && +\hip{\rep{02},012,\rep{12},12}{2}{}{2}\cancl{07}
\\&&+\hip{\rep{02},012,\rep{12}}{12}{2}{2}\cancl{53} && +\hip{\rep{02},012,\rep{12}}{1}{12}{2}\cancl{47}
\\&&+\hip{\rep{02},012,\rep{12}}{12}{2}{2}\cancl{53} && +\hip{2,\rep{02},012,\rep{12}}{12}{}{2}\cancl{50}
\end{align*}
\begin{align*} %--------------------------------1 case--------------------------
\delt_0 (\hip{\rep{01},012,\rep{02},012,\rep{12}}{1}{}{1})=\hspace{-6.8cm}
\\&& \hip{\rep{01},0,\rep{01},012,\rep{02},012,\rep{12}}{1}{}{1}\cancl{54}
&&+\hip{\rep{01},1,\rep{01},012,\rep{02},012,\rep{12}}{1}{}{1}\cancl{55} 
\\&&+\hip{\rep{01},01,\rep{02},012,\rep{12}}{1}{}{1}\cancl{38} 
&&+\hip{\rep{01},02,\rep{02},012,\rep{12}}{1}{}{1}\cancl{38} 
\\&&+\hip{\rep{01},12,\rep{02},012,\rep{12}}{1}{}{1}\cancl{48} 
\\&&+\hip{\rep{01},012,\rep{02},0,\rep{02},012,\rep{12}}{1}{}{1}\cancl{56} 
&&+\hip{\rep{01},012,\rep{02},2,\rep{02},012,\rep{12}}{1}{}{1}\cancl{57} 
\\&&+\hip{\rep{01},012,\rep{02},01,\rep{12}}{1}{}{1}\cancl{39} 
&&+\hip{\rep{01},012,\rep{02},02,\rep{12}}{1}{}{1}\cancl{38} 
\\&&+\hip{\rep{01},012,\rep{02},12,\rep{12}}{1}{}{1}\cancl{38} 
\\&&+\hip{\rep{01},012,\rep{02},012,\rep{12},1,\rep{12}}{1}{}{1}\cancl{58} 
&&+\hip{\rep{01},012,\rep{02},012,\rep{12},2,\rep{12}}{1}{}{1}\cancl{59} 
\\
\delt_1 (\hip{\rep{01},012,\rep{02},012,\rep{12}}{1}{}{1})=\hspace{-6.8cm}
\\&& \hip{\rep{01},0,01,\rep{01},012,\rep{02},012,\rep{12}}{1}{}{1}\cancl{54} && +\hip{\rep{01},01,1,\rep{01},012,\rep{02},012,\rep{12}}{1}{}{1}\cancl{55}
\\&& +\hip{\rep{01},0,012,\rep{02},012,\rep{12}}{1}{}{1}\cancl{54} && +\hip{\rep{01},01,12,\rep{02},012,\rep{12}}{1}{}{1}\cancl{48}
\\&& +\hip{\rep{01},012,2,\rep{02},012,\rep{12}}{1}{}{1}\cancl{57} && 
\\&& +\hip{\rep{01},012,\rep{02},0,02,\rep{02},012,\rep{12}}{1}{}{1}\cancl{56} && +\hip{\rep{01},012,\rep{02},02,2,\rep{02},012,\rep{12}}{1}{}{1}\cancl{57}
\\&& +\hip{\rep{01},012,\rep{02},0,012,\rep{12}}{1}{}{1}\cancl{56} && +\hip{\rep{01},012,\rep{02},01,12,\rep{12}}{1}{}{1}\cancl{39}
\\&& +\hip{\rep{01},012,\rep{02},012,2,\rep{12}}{1}{}{1}\cancl{59} && 
\\&& +\hip{\rep{01},012,\rep{02},012,\rep{12},1,12,\rep{12}}{1}{}{1}\cancl{58} && +\hip{\rep{01},012,\rep{02},012,\rep{12},12,2,\rep{12}}{1}{}{1}\cancl{59}
\\&& +\hip{\rep{01},012,\rep{02},012,\rep{12},1}{1}{}{1}\cancl{58} && +\hip{\rep{01},012,\rep{02},012,\rep{12}}{1}{1}{1}\cancl{60}
\\&& +\hip{\rep{01},012,\rep{02},012,\rep{12}}{1}{1}{1}\cancl{60} && +\hip{1,\rep{01},012,\rep{02},012,\rep{12}}{1}{}{1}\cancl{55}
\end{align*}

In the above sum, there are several terms which appear twice and which therefore cancel (over $\Z_2$). The terms that are labeled with the following numbers appear twice and cancel:
\[
\cancl{03}, \cancl{08}, \cancl{09}, \cancl{10}, \cancl{15}, \cancl{16}, \cancl{17}, \cancl{25}, \cancl{31}, \cancl{32}, \cancl{33}, \cancl{40}, \cancl{45}, \cancl{46}, \cancl{47}, \cancl{53}, \cancl{60}.
\]
Moreover, in some instances the sum of three terms cancel. The following labels appear three times and also cancel:
\begin{align*}
 \cancl{01}, \cancl{02},  \cancl{06}, \cancl{07}, \cancl{11}, \cancl{12}, \cancl{18}, \cancl{19}, \cancl{20}, \cancl{21}, \cancl{24}, \cancl{26}, \cancl{27}, \cancl{28}, \cancl{29},
  \cancl{30}, \cancl{34}, \cancl{35},\\ \cancl{36},\cancl{37}, \cancl{39}, \cancl{41}, \cancl{42}, \cancl{43}, \cancl{44}, \cancl{48}, \cancl{49}, \cancl{50}, \cancl{51}, \cancl{52}, \cancl{54}, \cancl{55}, \cancl{56}, \cancl{57}, \cancl{58}, \cancl{59}.
 \end{align*}
There are six terms labeled with \cancl{38}, whose sum vanishes:
\begin{align*}
\hspace{0mm}\cancl{38}=&(\hip{\rep{01},012,\rep{02}}{1}{}{1}+\hip{\rep{01},012,\rep{02},02,\rep{12}}{1}{}{1}+\hip{\rep{01},012,\rep{02},12,\rep{12}}{1}{}{1})
\\& \hspace{-2mm}+ (\hip{\rep{02},012,\rep{12}}{1}{}{1}+\hip{\rep{01},01,\rep{02},012,\rep{12}}{1}{}{1} +\hip{\rep{01},02,\rep{02},012,\rep{12}}{1}{}{1})
\\ =& \hip{\rep{01},012,\rep{12}}{1}{}{1}+\hip{\rep{01},012,\rep{12}}{1}{}{1}=0
\end{align*}
The left-over terms that do not cancel are:
\begin{align*}
 \cancl{04} &=\hip{\rep{02}}{02}{}{2}\\
 \cancl{05} &=\hip{\rep{02}}{12}{}{2}+\hip{\rep{02},02,\rep{12}}{12}{}{2}+\hip{\rep{02},12,\rep{12}}{12}{}{2} =\hip{\rep{12}}{12}{}{2}\\
 \cancl{13}&=\hip{\rep{02}}{0}{}{01}+ \hip{\rep{01},01,\rep{02}}{0}{}{01}+\hip{\rep{01},02,\rep{02}}{0}{}{01}=\hip{\rep{01}}{0}{}{01}\\
  \cancl{14}&=\hip{\rep{02}}{0}{}{02}\\
  \cancl{22}&=\hip{\rep{02}}{1}{}{12}+ \hip{\rep{02},02,\rep{12}}{1}{}{12}+ \hip{\rep{02},12,\rep{12}}{1}{}{12}=\hip{\rep{12}}{1}{}{12}\\
  \cancl{23}&=\hip{\rep{02}}{01}{}{1}+ \hip{\rep{01},01,\rep{02}}{01}{}{1}+\hip{\rep{01},02,\rep{02}}{01}{}{1}=\hip{\rep{01}}{01}{}{1}
\end{align*}
These terms are precisely $\BF^{[02]}$, $\BF^{[12]}$, and $\BF^{[01]}$, so that $\delt\BF^{[012]}=\delt_0\BF^{[012]}+\delt_1\BF^{[012]}=\BF^{[01]}+\BF^{[02]}+\BF^{[12]}$, which is \eqref{EQU:D(F012)=F01+F02+F12}.

\end{document}